\documentclass[a4paper,12pt]{article}
\usepackage[top=2.5cm,bottom=2.5cm,left=2.5cm,right=2.5cm]{geometry}
\usepackage{cite, amsmath, amssymb}
\usepackage{amssymb}
\usepackage{graphicx}
\newenvironment{proof}{{\noindent \it Proof.}}{\hfill $\blacksquare$\par}
\usepackage{latexsym}
\usepackage{CJK}
\usepackage{graphicx,booktabs,multirow}
\usepackage{tikz}
\usetikzlibrary{decorations.pathreplacing}
\usetikzlibrary{intersections}
\usepackage{enumerate}
\usepackage{graphicx,booktabs,multirow}
\usepackage{appendix}
\newtheorem{theorem}{Theorem}[section]
\newtheorem{proposition}[theorem]{\rm\bfseries Proposition}
\newtheorem{lemma}[theorem]{Lemma}

\newtheorem{remark}[theorem]{Remark}

\newtheorem{problem}{Problem}[section]

\usepackage[section]{placeins}
\usepackage{setspace}
\numberwithin{equation}{section}
\allowdisplaybreaks
\newcommand \dbar {{d\mkern-8mu\mathchar'26\mkern-1mu}}
\def\NAT@def@citea{\def\@citea{\NAT@separator}}
\usepackage{indentfirst}
\begin{document}
\vspace*{10mm}

\noindent
{\Large \bf Signed graphs with exactly two main eigenvalues: The unicyclic case}

\vspace*{7mm}

\noindent
{\large \bf Zenan Du$^{1,*}$, Fenjin Liu$^2$, Hechao Liu$^3$, Jifu Lin$^4$, Wenxu Yang$^5$}
\noindent

\vspace{7mm}

\noindent
$^1$ School of Mathematics and Statistics, Shanxi University, Shanxi, 030006, P. R. China,\\
$^2$ School of Science, Changan University, Xian, 710064, P.R. China,\\
$^3$ School of Mathematics and Statistics, Hubei Normal University, Huangshi, 435002, P. R. China, \\
$^4$ School of Mathematical Sciences, South China Normal University,  Guangzhou, 510631, P. R. China,\\
$^5$ Taiyuan Branch, Shennong Technology Group Co., Ltd., Shanxi, 030006, P. R. China\\
E-mail: {\tt duzn@sxu.edu.cn}, {\tt fenjinliu@163.com}, {\tt hechaoliu@yeah.net},\\ \quad{\tt 2023021893@m.scnu.edu.cn}, {\tt amber130120@icloud.com} \\[2mm]
$^*$ Corresponding author
\noindent

\vspace{7mm}

\noindent
{\bf Abstract} \
\noindent
An eigenvalue $\lambda$ of a signed graph $S$ of order $n$ is called a main eigenvalue if its eigenspace is not orthogonal to the all-ones vector $j$. Characterizing signed graphs with exactly $k$ $(1\le k\le n)$ distinct main eigenvalues is a problem in algebraic and graph theory that has been studied since 2020. Du et al. (2024, 2026) characterized a class of signed graphs with exactly two main eigenvalues by analyzing a type of multigraph whose base graph is a tree. In this paper, we extend this study to the case where the associated multigraph has a unicyclic base graph, and we conclude by proposing several open problems.
 \\[2mm]

\noindent
{\bf Keywords:} \ Signed graph; multigraph; main eigenvalue; unicyclic; walk-matrix

\noindent
{\bf AMS subject classification:}\	05C22, 05C50
\baselineskip=0.30in

\section{Introduction}\label{sec-introduction}
\begin{spacing}{1.3}
Let $G$ be a simple graph (without multiple edges and self-loops) of order $n$ with vertex set $V(G)$ and edge set $E(G)$. If the vertices $v_i$ and $v_j$ are adjacent, we write $v_i\sim v_j$ (otherwise, $v_i\nsim v_j$), then $e=v_iv_j$ is an edge belonging to $E(G)$. For $\sigma: E(G)\rightarrow\{-1, +1\}$, $S=(G, \sigma)$ is called a \emph{signed} graph derived from its underlying graph $G$ with every edge having a positive or negative sign.

A graph $H$ is called a \emph{multigraph} if $E(H)$ is not necessarily composed of distinct pairs of vertices, then multiple edges are allowed and thus $E(H)$ is a multi-set. For a multigraph $H$, let $w_H(u,v)$ ($w(u,v)$ for short) be the number of edges between the vertices $u$ and $v$ in $H$. Then a simple graph $G$ or a signed graph $S$ can be regarded as a special multigraph with $w(u,v)=1$ for all edges $uv$. For any vertex $v\in V(H)$, let $N_H(v)$ ($N(v)$ for short) be the neighborhood set (composed of distinct vertices) of $v$ in $H$ and $d_H(v)$ ($d(v)$ for short) be the number of edges incident with $v$ in $H$. Clearly, $|N(v)|\leq d(v)=\sum\limits_{u\sim v} w(u,v)$, and equality holds if and only if $w(u,v)=1$ holds for any $u\sim v$. If $w(u,v)\in\{0,1,2\}$ holds for any $u,v\in V(H)$, then we say $H$ is a \emph{$(0,1,2)$-multigraph}. Let $H^\circ$ be the \emph{basic graph} (\emph{B-graph} for short) of a multigraph $H$, where $H^\circ$ is obtained from $H$ by replacing parallel edges with a single edge. Then $H^\circ$ is a simple graph with $V(H)=V(H^\circ)$, and thus $H$, $H^\circ$ have the same connectivity.

Let $S$ be a signed graph with its underlying graph $G$ of order $n$. The \emph{adjacency matrix} $A(S)=(s_{ij})$ of $S$ is a symmetric $(0,1,-1)$-matrix, where $s_{ij}=1$ if $v_i\sim v_j$ and $v_iv_j$ is a positive edge, $s_{ij}=-1$ if $v_i\sim v_j$ and $v_iv_j$ is a negative edge, $s_{ij}=0$ if $v_i\nsim v_j$. The \emph{eigenvalues} of $S$ are the eigenvalues of its adjacency matrix $A(S)$.

Let $M$ be a real symmetric matrix. An eigenvalue $\lambda$ of $M$ is said to be a \emph{main eigenvalue} if its \emph{eigenspace} $\varepsilon_M(\lambda)$ is not orthogonal to the all-ones vector $j=[1,1,\dots,1]^T$, and this definition does not depend on the multiplicity of $\lambda$. The main eigenvalues of a simple graph $G$, a signed graph $S$, and a multigraph $H$ are the main eigenvalues of its adjacency matrix $A(G)$, $A(S)$, and $A(H)$, respectively.

In 1978, Cvetkovi\'{c} has noticed that $G$ has exactly one main eigenvalue if and only if $G$ is regular. Besides, he posed the following long-standing problem: characterizing the graphs with exactly $k$ $(2\leq k \leq n)$ distinct main eigenvalues \cite{Cvetkovic-Main}. So far, this problem has attracted the attention of many scholars. For graphs of order $n$ with exactly two distinct main eigenvalues, one can see \cite{Feng,Hagos,Hou-bicyclic,Hou-trees,Hou-unicyclic,Hayat,Lepovic}; for graphs of order $n$ with exactly $n-1$ distinct main eigenvalues, one can see \cite{Du-1,Du-2,Du-3,Li,Qiu,Wang}; and for graphs of order $n$ with $n$ distinct main eigenvalues, one can see \cite{Cve-Controllable,Cve-Controllable-2,Stanic-2,Farrugia}, etc.

An \emph{automorphism} of a graph $G$ is a permutation $\pi$ of the vertex set $V(G)$ such that the pair of vertices $v_i\sim v_j$ if and only if $\pi(v_i)\sim\pi(v_j)$. The set of automorphisms of $G$ under the composition operation, form a group, called the \emph{automorphism group} of $G$ and denoted by ${\rm Aut}(G)$. Two vertices $v_i,v_j\in V(G)$ belong to the same \emph{orbit} if there is an automorphism $\pi\in {\rm Aut}(G)$ such that $\pi(v_i)=v_j$. In 1999, Cvetkovi\'{c} and Fowler \cite{Cve-group} showed that the number of distinct main eigenvalues of a graph $G$ does not exceed the number of orbits into which $V(G)$ is partitioned by ${\rm Aut}(G)$.

For a simple graph $G$ of order $n$, a \emph{walk} of length $k$ in $G$ from vertex $u$ to vertex $v$ is a sequence of vertices $v_1v_2\dots,v_kv_{k+1}$ (not necessarily distinct) such that $u=v_1,v=v_{k+1}$ and $v_i\sim v_{i+1}$ for each $1\le i\le k$. The \emph{walk-matrix} of $G$ is the $n\times n$ matrix
$W(G)=\begin{bmatrix}
    j,Aj,A^2j,\cdots,A^{n-1}j
  \end{bmatrix}$, where $A=A(G)$.
It is worth mentioning that, in 2002, Hagos \cite{Hagos} showed that the number of distinct main eigenvalues of a graph $G$ is equal to the rank of its walk-matrix $W(G)$.

Based on the research of main eigenvalues of simple graphs, a natural problem is characterizing the signed graphs with exactly $k$ ($\leq n$) distinct main eigenvalues. Let $d_{v_i}^+$ $(d_{v_i}^-)$ be the number of positive (negative) edges incident with $v_i$ in a signed graph $S$. The difference $d_{v_i}^+-d_{v_i}^-$ is called the \emph{net-degree} of $v_i$ and $S$ is called \emph{net-regular} if all vertices of $S$ have the same net-degree. In 2020, Stani\'{c} \cite{Stanic} has noticed that a signed graph has exactly one main eigenvalue if and only if it is net-regular, and in recent years, the papers \cite{Akbari,Du-4,Shao,Andelic} studied the main eigenvalues of signed graphs.

Let $S$ be a signed graph, $\tilde{S}$ be the \emph{associated multigraph} of $S$ with adjacency matrix $A(\tilde{S})=J-I-A(S)$, that is, for any vertices $u,v\in V(\tilde{S})$, $w_{\tilde{S}}(u,v)=i$ if $(A(\tilde{S}))_{uv}=i$ for $i\in\{0,1,2\}$. Since $A(S)$ is a symmetric $(0,1,-1)$-matrix, $A(\tilde{S})$ is a symmetric $(0,1,2)$-matrix, so we call the corresponding associated multigraph $\tilde{S}$ a $(0,1,2)$-multigraph. According to definitions of $S$ and $\tilde{S}$, Du et al. \cite{Du-4} obtained the following results.

\begin{proposition}\label{prop-one-to-one}{\rm (\cite{Du-4})}
  Let $S$ be a signed graph of order $n$, $\tilde{S}$ be the associated $(0,1,2)$-multigraph of $S$. Then \\
  {\rm (1)} there exists a bijection between the set of signed graphs of order $n$ and the set of $(0,1,2)$-multigraphs of order $n$;\\
  {\rm (2)} $S$ and $\tilde{S}$ have the same number of distinct main eigenvalues.
\end{proposition}
\vskip0.2cm

By Proposition $\ref{prop-one-to-one}$, one can study the signed graphs of order $n$ with exactly $k$ $(\leq n)$ distinct main eigenvalues by studying $(0,1,2)$-multigraphs of order $n$ with exactly $k$ $(\leq n)$ distinct main eigenvalues. In \cite{Du-4}, the authors studied the sufficient and necessary conditions for $k=2$ based on Theorem $2.1$ in \cite{Stanic}.

\begin{theorem}\label{lemma-s(v)}{\rm (\cite{Du-4})}
  Let $H$ be a multigraph of order $n$ $(\geq2)$ and $A$ $(=A(H))$ be its adjacency matrix, $s(v)$ be the number of walks of length $2$ in $H$ that starting at $v\in V(H)$. Then the following three conditions are equivalent.\\
  {\rm (1)} $H$ has exactly two distinct main eigenvalues.\\
  {\rm (2)} $j$ is not an eigenvector of $A$ and $(A^2-aA-bI)j=0$ holds for some integers $a,b$.\\
  {\rm (3)} There exist integers $a,b$ such that \begin{equation}\label{eq-s(v)-2}
  ad(v)+b=s(v)=\sum\limits_{u\sim v} w(u,v)d(u)
  \end{equation}
  holds for any vertex $v\in V(H)$ and $j$ is not an eigenvector of $A$.

  In particular, if $H$ is a $(0,1,2)$-multigraph of order $n$ $(\geq3)$ and the $B$-graph of $H$ is a tree, then $H$ has exactly two distinct main eigenvalues if and only if there exist integers $a,b$ such that \eqref{eq-s(v)-2} holds for any vertex $v\in V(H)$.
\end{theorem}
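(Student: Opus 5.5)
The plan is to establish the cycle (1)$\Rightarrow$(2)$\Rightarrow$(3)$\Rightarrow$(1), and then handle the tree case separately. Throughout, write $A=A(H)$. Since $A$ is a nonnegative integer symmetric matrix, we can use the spectral decomposition $j=\sum_{i}P_ij$, where $P_i$ is the orthogonal projection onto $\varepsilon_A(\mu_i)$. The main eigenvalues are exactly the $\mu_i$ with $P_ij\neq0$.

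\textbf{(1)$\Rightarrow$(2).} Suppose the main eigenvalues are $\mu_1\neq\mu_2$.
\begin{itemize}
\item Then $j=x_1+x_2$ with $x_i=P_ij\neq 0$. This forces $j$ not to be an eigenvector, since otherwise only one eigenvalue would be main.
\item We get $(A-\mu_1I)(A-\mu_2I)j=0$, i.e.\ $(A^2-aA-bI)j=0$ with $a=\mu_1+\mu_2$ and $b=-\mu_1\mu_2$.
\item Integrality of $a,b$: the vectors $j,Aj,A^2j$ are integral and $j,Aj$ are linearly independent. Solve $A^2j=aAj+bj$ on two coordinates $u,v$ with $d(u)\neq d(v)$; such a pair exists because $Aj$ is not a multiple of $j$. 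This shows $a,b\in\mathbb{Q}$.
\item Also $\mu_1,\mu_2$ are roots of $x^2-ax-b$ and are algebraic integers, being eigenvalues of an integer matrix. Hence $a$ and $b$ are algebraic integers that are rational, so they are integers.
\end{itemize}

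\textbf{(2)$\Rightarrow$(1).} The relation $(A^2-aA-bI)j=0$ gives $0=\sum_i(\mu_i^2-a\mu_i-b)P_ij$. Since the $P_ij$ are mutually orthogonal, every main eigenvalue is a root of $x^2-ax-b$, so there are at most two. Because $j$ is not an eigenvector, there are at least two.

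\textbf{(2)$\Leftrightarrow$(3).} This is just the coordinate reading of the identity: $(Aj)_v=d(v)$ and $(A^2j)_v=\sum_{u\sim v}w(u,v)d(u)=s(v)$.

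\textbf{Tree case.} Here $H^\circ$ is a tree and $n\ge3$; I must show that \eqref{eq-s(v)-2} alone already excludes that $j$ is an eigenvector, i.e.\ that $H$ is regular. This is the step I expect to need the most care. The argument runs as follows.
\begin{itemize}
\item A tree has a leaf $v$, with a unique neighbour $u$, so $d(v)=w(u,v)\in\{1,2\}$.
\item If $H$ were regular of degree $r$, then $d(u)=r=w(u,v)$. So every edge at $u$ is the one to $v$, and $|N(u)|=1$.
\item This makes $\{u,v\}$ a connected component of $H^\circ$, contradicting that $H^\circ$ is a connected tree with $n\ge3$.
\end{itemize}
Hence $j$ is never an eigenvector in this case, and the equivalence with (3) gives the claim.
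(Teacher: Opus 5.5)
Your proof is correct and complete. The eigenprojection decomposition of $j$ gives (1)$\Leftrightarrow$(2), and the step ``unique rational solution of the $2\times 2$ system plus algebraic integers'' correctly forces $a,b\in\mathbb{Z}$. The leaf argument also correctly rules out regularity in the tree case; it does not even use the $(0,1,2)$ hypothesis.

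The paper does not prove this statement itself. It quotes the result from \cite{Du-4}, where it is obtained by specializing Theorem 2.1 of \cite{Stanic} on main eigenvalues of real symmetric matrices. Your argument is essentially a self-contained version of that route.
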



\begin{proposition}\label{prop-a-b-fenlei}{\rm (\cite{Du-4})}
  Let $H$ be a $(0,1,2)$-multigraph of order $n$ $(\geq3)$ with exactly two distinct main eigenvalues, the $B$-graph of $H$ be a tree, $P=v_0v_1\cdots v_l$ be the longest path of $H$, and $a,b$ be integers that satisfy \eqref{eq-s(v)-2} for any $v\in V(H)$. Then: {\rm (1)} $a\geq0$; {\rm (2)} if $a=0$, then $b>0$; if $b=0$, then $a>0$.
\end{proposition}

By Proposition \ref{prop-a-b-fenlei}, one only needs to study the following four cases on $a,b$: (1) $a=0$ and $b>0$; (2) $a=1$ and $b\neq0$; (3) $a\geq2$ and $b\neq0$; (4) $a>0$ and $b=0$. While Cases (1)-(2) and (3) were settled in \cite{Du-4} and \cite{Du-5} respectively, Case (4) remains open.

In this paper, we study the case where the aforementioned $(0,1,2)$-multigraph has a unicyclic $B$-graph and propose several problems.

\section{Preliminaries}


An \emph{$v$-path} is a path with initial vertex $v$. Let $T$ be a tree of order $n$, $P_{l+1}=v_0v_1\cdots v_l$ $(2\leq l\leq n-1)$ be a path of $T$ with length $l$. Now we let $1\leq k\leq \lfloor\frac{l}{2}\rfloor$, $v\in N_T(v_{k})\backslash \{v_{k+1}\}$ and $P_t$ be the longest $v$-path in $T-v_k$. Then we say $v$ is a \emph{$(v_k,t)$-vertex} in $T$.

For example, $v_0$ is a $(v_1,1)$-vertex and $v_3$ is a $(v_4,4)$-vertex in tree $T^*$ (see Figure \ref{vk-t}).

\begin{figure}[!h]
  \centering
  \includegraphics[width=6cm]{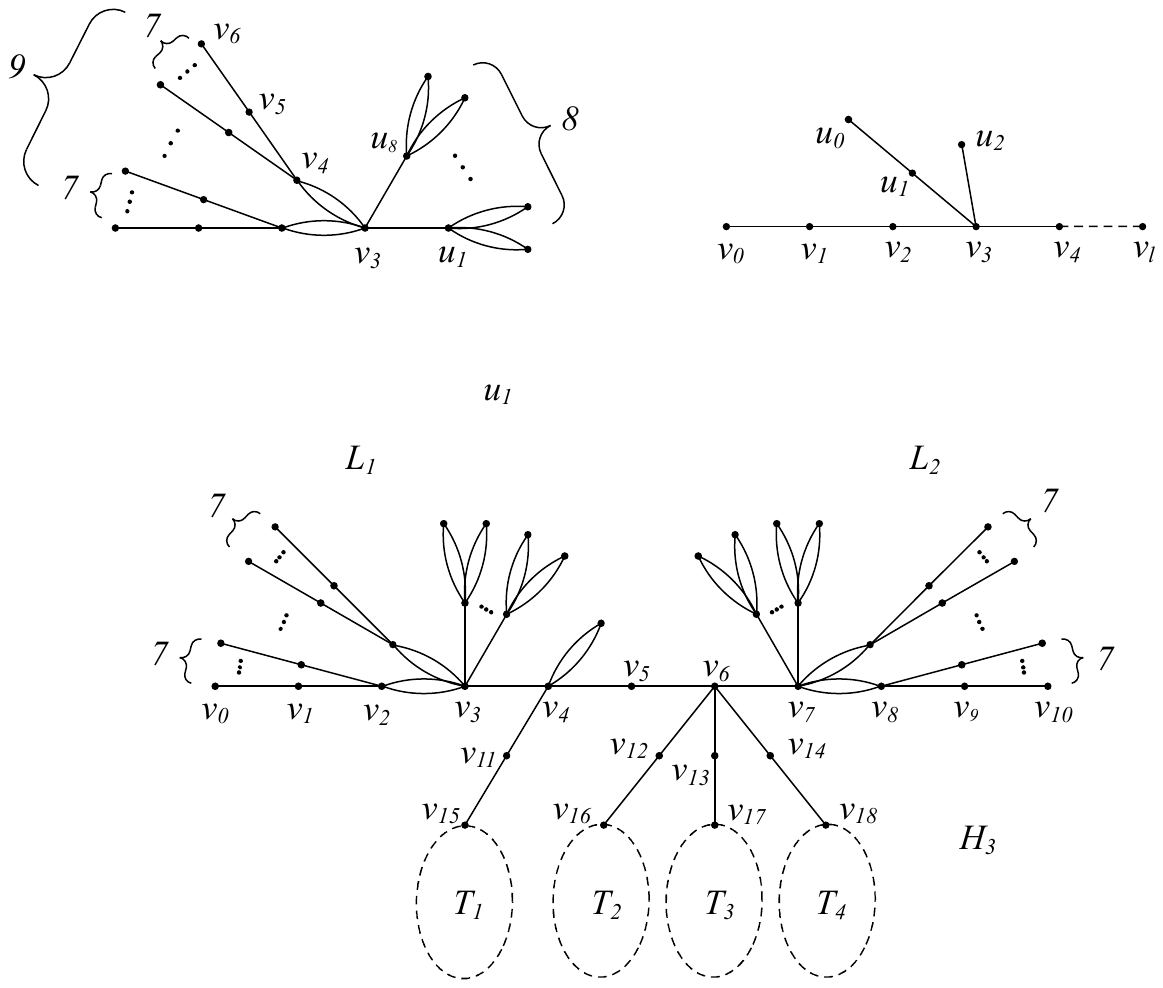}\\
  \caption{The tree $T^*$.}\label{vk-t}
\end{figure}

The following results are useful and interesting.

\begin{lemma}\label{lemma-EV} {\rm (\cite{Du-4})}
Let $T$ be a tree of order $n$, $P_{l+1}=v_0v_1\ldots v_l$ $(2\leq l \leq n-1)$ be a path of $T$ with length $l$, and for any
$v\in\left\{
\begin{array}{ll}
  N_T(v_k)\backslash\{v_{k+1}\}, &\mbox{if $1\leq k\leq\lfloor\frac{l}{2}\rfloor$,}\\
  N_T(v_k)\backslash\{v_{k-1}\}, &\mbox{if $\lfloor\frac{l}{2}\rfloor+1\leq k\leq l-1$},
\end{array}\right.$
$P_t$ be the longest $v$-path of $T-v_k$. Then $P_{l+1}$ is the longest path of $T$ if and only if
$t\leq\left\{
\begin{array}{ll}
  k, &\mbox{if $1\leq k\leq\lfloor\frac{l}{2}\rfloor$,}\\
  l-k, &\mbox{if $\lfloor\frac{l}{2}\rfloor+1\leq k\leq l-1$}.
\end{array}\right.$
\end{lemma}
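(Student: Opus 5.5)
The plan is to prove each direction directly, using only the tree property that between any two vertices of $T$ there is a unique path. Fix $k$ and a neighbour $v$ of $v_k$ as in the statement. Throughout, $t$ is read consistently with the example of $T^*$, namely as the length (number of edges) of the longest $v$-path $Q$ in $T-v_k$. By the symmetry $v_i\leftrightarrow v_{l-i}$ it suffices to treat $1\le k\le\lfloor\frac{l}{2}\rfloor$, where $v\neq v_{k+1}$ and we must show $t\le k$. The case $\lfloor\frac{l}{2}\rfloor+1\le k\le l-1$ is the same statement for the reversed path $v_lv_{l-1}\cdots v_0$, with $k$ replaced by $l-k$. When reversing, one must check that the index ranges match, i.e. that $l-k\le\lfloor l/2\rfloor$ for such $k$ when $l$ is even and odd. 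The bound $\lfloor l/2\rfloor+1\le k$ only gives $l-k\le\lceil l/2\rceil-1\le\lfloor l/2\rfloor$, so this works.

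\emph{Necessity.} Suppose $P_{l+1}$ is a longest path of $T$ and $v\in N_T(v_k)\setminus\{v_{k+1}\}$. Since $Q$ lies in the component of $T-v_k$ containing $v$, and $v\ne v_{k+1}$, that component contains no vertex of $v_{k+1}\cdots v_l$. (If $v=v_{k-1}$, the component contains $v_0,\dots,v_{k-1}$ but nothing beyond $v_k$.) Hence the walk consisting of $Q$ reversed, followed by the edge $vv_k$ and then $v_kv_{k+1}\cdots v_l$, is a path of length $t+1+(l-k)$. Maximality of $l$ gives $t+1+l-k\le l$, that is, $t\le k-1\le k$. This already proves the required inequality, so the indexing convention for $t$ does not matter in this direction.

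\emph{Sufficiency.} Assume the inequality for every admissible $k$ and $v$, and let $R$ be any path of $T$. We want to show $|R|\le l$. First I would dispose of the case where $R$ is disjoint from $\{v_1,\dots,v_{l-1}\}$. Connect $R$ to $P$ through the unique tree path, which enters $P$ at some internal vertex $v_k$, or at an endpoint. If it enters at an endpoint, say $v_0$ with neighbour $v\ne v_1$, then $P$ is extendable, and this contradicts the hypothesis at $k=1$ via the neighbour $v_0$: the longest $v_0$-path in $T-v_1$ would have length at least $1$, exceeding the allowed bound. So I would reduce to the case where $R$ meets $P$ in a subpath $v_i\cdots v_j$, possibly a single vertex, or where $R$ hangs off $v_k$ entirely inside a branch. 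In the first case, $R$ consists of at most two pendant pieces attached at $v_i$ and $v_j$ through neighbours off $P$, together with $v_i\cdots v_j$. Each pendant piece at $v_i$ has length at most $1+t$, where $t\le\min(i,l-i)$ by the hypothesis applied to that off-path neighbour. Adding up gives $|R|\le (1+\min(i,l-i))+(j-i)+(1+\min(j,l-j))$, and this has to be compared with $l$.

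The main obstacle is exactly this counting in the sufficiency direction. With the bound $t\le k$ as literally stated, the piece at $v_i$ could reach length $k+1>k$ from $v_i$. That would allow a path longer than $P$, e.g.\ a branch of length $k+1$ at $v_k$. So the intended convention is presumably that ``$P_t$'' has $t$ vertices, so that its length is $t-1$; this is consistent with $v_0$ being a $(v_1,1)$-vertex. I would therefore restate the hypothesis as: the longest $v$-path in $T-v_k$ has at most $k$ vertices, so its length is at most $k-1$. Then a branch at $v_i$ has length at most $i$ from $v_i$, and the sum above is at most $i+(j-i)+(l-j)=l$. A branch at an internal vertex not containing $v_{i\pm1}$ is handled the same way, with $v_{k-1}$ counted via the side of $P$ itself. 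Finally, the necessity bound $t\le k$ (in vertices) follows from the same concatenation argument as above. So the whole proof reduces to fixing this convention carefully and carrying out the case analysis on where $R$ leaves $P$.
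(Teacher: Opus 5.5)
Your necessity direction is correct: the reversed $v$-path, followed by $v_kv_{k+1}\cdots v_l$, is a genuine path because $v$ and $v_{k+1}$ lie in different components of $T-v_k$. It shows that the $v$-path has at most $k$ vertices. The paper gives no proof of this lemma (it is quoted from \cite{Du-4}), so your sufficiency argument has to stand on its own, and it has a real gap.

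The gap is at the step where you dispose of an extension of $P$ at an endpoint, ``say $v_0$''. The other endpoint $v_l$ is not covered by symmetry, because the hypothesis is not invariant under $v_i\leftrightarrow v_{l-i}$. For even $l$ the index $\frac{l}{2}$ occurs only in the first range. To see that $v_l$ is a leaf you need the condition at $k=l-1$ with $v=v_l$. That condition exists only when $l-1\ge\lfloor\frac{l}{2}\rfloor+1$, i.e.\ $l\ge3$.

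For $l=2$ the second range is empty, so $N_T(v_2)\setminus\{v_1\}$ is never tested, and the ``if'' direction is actually false. Take the path $v_0v_1v_2w$ with $P_3=v_0v_1v_2$. The only tested vertex is $v_0\in N_T(v_1)\setminus\{v_2\}$, with $t=1\le 1$, yet $v_0v_1v_2w$ is longer than $P_3$. So your argument cannot be completed as stated, and you must restrict the sufficiency claim to $l\ge3$. This does not affect the paper, which only uses the ``only if'' direction (conclusion (I) of Remark~\ref{remark-main}).

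For $l\ge3$ your idea works, but two things should be repaired.

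\emph{The convention for $t$.} Your opening paragraph calls the edge-count reading of $t$ ``consistent with $T^*$''. It is not: under that reading $v_0$ would be a $(v_1,0)$-vertex. Both the notation $P_{l+1}$ and the example force $t$ to count vertices. Adopt that reading from the start rather than switching midway; your necessity computation is valid under it.

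\emph{The case analysis.} It becomes much cleaner if you let $h_i$ be the height of the subtree hanging at $v_i$, meaning the component containing $v_i$ after deleting the edges of $P$.
\begin{itemize}
\item The conditions at $k=1$, $v=v_0$ and at $k=l-1$, $v=v_l$ give $h_0=h_l=0$.
\item The conditions at the off-path neighbours of $v_i$ give $h_i\le\min(i,l-i)$ for $1\le i\le l-1$. This is exactly where the split at $\lfloor\frac{l}{2}\rfloor$ is used.
\item In a tree, a path $R$ meets $P$ in a subpath $v_i\cdots v_j$ or not at all.
\item If $i<j$, then $R$ has length at most $h_i+(j-i)+h_j\le i+(j-i)+(l-j)=l$.
\item If $i=j$, then $R$ has length at most $2h_i\le l$.
\item If $R$ misses $P$, it lies in a single hanging subtree minus its root, so its length is at most $2h_i-2<l$.
\end{itemize}
This replaces the tangled cases in your sketch (``disjoint from $\{v_1,\dots,v_{l-1}\}$'', ``hangs off $v_k$ entirely inside a branch'').
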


In a simple graph, a \emph{pendent} vertex is a vertex of degree $1$. We say $v$ is a pendent vertex in $H$ if $v$ is a pendent vertex in the $B$-graph of $H$. If $v_0v_1\ldots v_{l-1}v_l$ is the longest path of $H$, then for any vertex $u\in N(v_1)\backslash\{v_2\}$, $u$ is a pendent vertex, say, $(v_1,1)$-vertex. If $v\in V(H)$ is a $(v_k,t)$-vertex in the $B$-graph of $H$, then we say $v\in V(H)$ is a $(v_k,t)$-vertex in $H$.

\begin{lemma}{\rm (\cite{Du-4})}\label{lem-b=0}
 Let $H,a,b$ be as in Proposition $\ref{prop-a-b-fenlei}$, $u,v\in N(v_1)$ be pendent vertices of $H$. If $w(u,v_1)=1$ and $w(v,v_1)=2$, then $b=0$.
\end{lemma}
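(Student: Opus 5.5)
We apply condition \eqref{eq-s(v)-2} of Theorem \ref{lemma-s(v)} to the pendent vertices $u$ and $v$ themselves. Since $u$ is pendent with unique neighbour $v_1$ and $w(u,v_1)=1$, we have $d(u)=1$ and $s(u)=w(u,v_1)d(v_1)=d(v_1)$. Similarly $v$ is pendent with $w(v,v_1)=2$, so $d(v)=2$ and $s(v)=2d(v_1)$.

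Substituting into \eqref{eq-s(v)-2} gives the pair of equations
\begin{equation*}
a\cdot 1+b=d(v_1),\qquad a\cdot 2+b=2d(v_1).
\end{equation*}
Multiply the first equation by $2$ and subtract the second. This yields $2a+2b-(2a+b)=0$, that is, $b=0$, as claimed.

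There is essentially no obstacle. The only points to check are, first, that the degrees of $u$ and $v$ in $H$ are exactly $w(u,v_1)$ and $w(v,v_1)$, which follows from the definition of a pendent vertex in $H$ through its $B$-graph. Second, \eqref{eq-s(v)-2} must hold at every vertex, which is guaranteed by the hypotheses inherited from Proposition \ref{prop-a-b-fenlei}. The fact that $P$ is a longest path is not needed beyond ensuring that $v_1$ has such pendent neighbours.
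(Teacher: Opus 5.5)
Your proof is correct: evaluating \eqref{eq-s(v)-2} at the two pendent vertices gives $d(v_1)=a+b$ and $d(v_1)=a+\frac{b}{2}$, which forces $b=0$. The paper does not reprove this lemma (it is quoted from \cite{Du-4}), but your computation is the same one that produces the dichotomy $d(u_1)\in\{a+b,a+\frac{b}{2}\}$ in Lemma \ref{lemma-dv1-dv2}(1), so it is essentially the same approach.
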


\begin{lemma}{\rm (\cite{Du-4})}\label{prop-type}
  Let $H,a,b$ be as in Proposition $\ref{prop-a-b-fenlei}$, $v_1,v_2\in V(H)$ with $v_1\sim v_2$, and for any vertex $v\in N(v_1)\backslash\{v_2\}$, $v$ is a pendent vertex. Then $v_1$ and $v_2$ satisfies one of the following six conditions:\\
  {\rm (1)} $w(v_1,v_2)=1$, $w(v,v_1)=1$ for any vertex $v\in N(v_1)\backslash \{v_2\}$, $d(v_1)=a+b$ $(\geq 2)$ and $d(v_2)=a(a+b-1)+1$; \\
  {\rm (2)} $w(v_1,v_2)=1$, $w(v,v_1)=2$ for any vertex $v\in N(v_1)\backslash \{v_2\}$, $d(v_1)=a+\frac{b}{2}$ $(\geq 3)$ is odd and $d(v_2)=a(a+\frac{b}{2}-2)+2$; \\
  {\rm (3)} $w(v_1,v_2)=2$, $w(v,v_1)=1$ for any vertex $v\in N(v_1)\backslash \{v_2\}$, $d(v_1)=a+b$ $(\geq 3)$ and $d(v_2)=\frac{1}{2}a(a+b-1)+1$; \\
  {\rm (4)} $w(v_1,v_2)=2$, $w(v,v_1)=2$ for any vertex $v\in N(v_1)\backslash \{v_2\}$, $d(v_1)=a+\frac{b}{2}$ $(\geq 4)$ is even and $d(v_2)=\frac{1}{2}a(a+\frac{b}{2}-2)+2$;\\
  {\rm (5)} $w(v_1,v_2)=1$, there exist at least one vertex $v\in N(v_1)\backslash\{v_2\}$ satisfies $w(v,v_1)=1$ and $k_1$ $(\geq1)$ vertices $v'\in N(v_1)\backslash\{v_2\}$ satisfies $w(v',v_1)=2$, $d(v_1)=a$ $(\geq 2k_1+2)$ and $d(v_2)=a^2-a+1-2k_1$;\\
  {\rm (6)} $w(v_1,v_2)=2$, there exist at least one vertex $v\in N(v_1)\backslash\{v_2\}$ satisfies $w(v,v_1)=1$ and $k_2$ $(\geq1)$ vertices $v'\in N(v_1)\backslash\{v_2\}$ satisfies $w(v',v_1)=2$, $d(v_1)=a$ $(\geq 2k_2+3)$ and $d(v_2)=\frac{1}{2}(a^2-a+2-2k_2)$.
\end{lemma}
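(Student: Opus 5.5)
The plan is to apply condition \eqref{eq-s(v)-2} of Theorem \ref{lemma-s(v)}, first at the pendent neighbours of $v_1$ and then at $v_1$ itself. Since $H$ has exactly two main eigenvalues and its $B$-graph is a tree, there are integers $a,b$ with $s(v)=ad(v)+b$ for every $v\in V(H)$. As in the situation where the lemma is used ($v_1$ the second vertex of a longest path), I take $N(v_1)\backslash\{v_2\}$ to be nonempty.

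\textbf{Step 1: pendent vertices.} Let $v\in N(v_1)\backslash\{v_2\}$ be pendent with $w(v,v_1)=w\in\{1,2\}$. Then $d(v)=w$ and $s(v)=w\,d(v_1)$, so \eqref{eq-s(v)-2} gives $w\,d(v_1)=aw+b$.
\begin{itemize}
\item If $w=1$, then $d(v_1)=a+b$.
\item If $w=2$, then $d(v_1)=a+\frac b2$.
\end{itemize}
If both multiplicities occur among the pendent neighbours, then $a+b=a+\frac b2$. Hence $b=0$ and $d(v_1)=a$, which agrees with Lemma \ref{lem-b=0}. This gives three possibilities: all pendent edges at $v_1$ are single, all are double, or they are mixed. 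Combining these with $w(v_1,v_2)\in\{1,2\}$ produces exactly the six cases.

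\textbf{Step 2: the equation at $v_1$.} Write $p$ for the number of pendent neighbours in the uniform cases. In the mixed cases let $m\ge1$ be the number of single pendent edges and $k$ the number of double ones. Each pendent $u$ contributes $w(u,v_1)d(u)=w(u,v_1)^2$ to $s(v_1)$, and $v_2$ contributes $w(v_1,v_2)d(v_2)$. Equating $s(v_1)$ with $ad(v_1)+b$ and eliminating $b$ via Step 1 gives each case directly.
\begin{itemize}
\item Case (1): $d(v_1)=p+1\ge2$ and $s(v_1)=(d(v_1)-1)+d(v_2)$, so $d(v_2)=a(a+b-1)+1$.
\item Case (2): $d(v_1)=2p+1$ is odd and at least $3$, and $s(v_1)=2(d(v_1)-1)+d(v_2)$. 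With $b=2(d(v_1)-a)$ this gives $d(v_2)=a(d(v_1)-2)+2$.
\item Case (3): $d(v_1)=p+2\ge3$ and $s(v_1)=d(v_1)-2+2d(v_2)$. This yields $d(v_2)=\frac12a(a+b-1)+1$.
\item Case (4): $d(v_1)=2p+2$ is even and at least $4$, and $s(v_1)=2(d(v_1)-2)+2d(v_2)$. This yields $d(v_2)=\frac12a(d(v_1)-2)+2$.
\item Case (5): $d(v_1)=a=m+2k_1+1\ge 2k_1+2$ and $s(v_1)=m+4k_1+d(v_2)=a^2$. Hence $d(v_2)=a^2-a+1-2k_1$.
\item Case (6): $d(v_1)=a=m+2k_2+2\ge 2k_2+3$ and $s(v_1)=m+4k_2+2d(v_2)=a^2$. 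Hence $d(v_2)=\frac12(a^2-a+2-2k_2)$.
\end{itemize}

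\textbf{Main obstacle.} There is no conceptual difficulty; the only real point is the mixed case. There one must observe that the two pendent equations force $b=0$ and $d(v_1)=a$, and then the equation at $v_1$ must be written in terms of $k_i$ rather than $b$. The remaining work is routine bookkeeping of the parity of $d(v_1)$ and of the lower bounds, which come from counting at least one pendent neighbour, or at least one of each type in the mixed cases.
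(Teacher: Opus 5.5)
Your proof is correct and follows the same route as the argument behind this cited lemma. Evaluating $s(v)=ad(v)+b$ at a pendent neighbour $v$ of $v_1$ gives $d(v_1)=a+b/w(v,v_1)$, so mixed multiplicities force $b=0$ and $d(v_1)=a$ (exactly Lemma \ref{lem-b=0}), and evaluating it at $v_1$ then yields $d(v_2)$ in each of the six cases, with your algebra checking out in all of them. Your explicit assumption $N(v_1)\backslash\{v_2\}\neq\emptyset$ is the right one: it holds in every application, and if $v_1$ were itself pendent none of (1)--(6) could hold.
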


In the following paper, we denote $d(v_1),d(v_2)$ of $(i)$ in Lemma \ref{prop-type} by $d^{(i)}_{v_1},d^{(i)}_{v_2}$ for $i\in\{1,2,3,4,5,6\}$, respectively.

\begin{lemma}{\rm (\cite{Du-4})}\label{lem-different-1-4}
  Let $H,a,b$ be as in Proposition $\ref{prop-a-b-fenlei}$, $v_2$ be a $(v_3,3)$-vertex, $v_1, v_1'\in N(v_2)\backslash \{v_3\}$ be $(v_2,2)$-vertices in $H$, $v_1$ and $v_2$, $v_1'$ and $v_2$ satisfy  $(i),(j)$ of Lemma $\ref{prop-type}$ for $i,j\in\{1,2,3,4,5,6\}$, respectively. Then $i=j$.
\end{lemma}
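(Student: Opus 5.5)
The plan is to apply condition \eqref{eq-s(v)-2} at the common neighbour $v_2$ and at the two $(v_2,2)$-vertices $v_1,v_1'$, and to compare the possible values of $d(v_1)$ and $d(v_1')$ coming from the six types in Lemma~\ref{prop-type}. Because $v_1$ and $v_1'$ are $(v_2,2)$-vertices, every vertex of $N(v_1)\backslash\{v_2\}$ and of $N(v_1')\backslash\{v_2\}$ is pendent. Lemma~\ref{prop-type} therefore applies to both pairs $(v_1,v_2)$ and $(v_1',v_2)$, and it gives $d(v_2)$ as an explicit function of $a,b$ (and of $k_1$ or $k_2$ in cases (5), (6)). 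Since $d(v_2)$ is one number, the two expressions $d^{(i)}_{v_2}$ and $d^{(j)}_{v_2}$ must coincide. I would exploit this equality together with two further constraints:
\begin{itemize}
\item the parity and size conditions on $d(v_1)$ listed in Lemma~\ref{prop-type};
\item Lemma~\ref{lem-b=0} and Proposition~\ref{prop-a-b-fenlei}, which together give $a\ge 0$ and rule out mixed types when $b\ne 0$.
\end{itemize}

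The first step is a coarse split by $b$. If $b\neq0$, Lemma~\ref{lem-b=0} forbids pendent neighbours of $v_1$ with different multiplicities, so only types (1)--(4) are available. If $b=0$, I would first check whether types (1)--(4) degenerate; for instance, type (1) needs $d(v_1)=a\ge 2$, and types (2), (4) need $d(v_1)=a$ with a prescribed parity. Then the types fall into those with $w(v_1,v_2)=1$, namely (1), (2), (5), and those with $w(v_1,v_2)=2$, namely (3), (4), (6).

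The second step is a pairwise comparison of the formulas for $d(v_2)$ over all $i\neq j$. For example, equating (1) and (2) gives
\[
a(a+b-1)+1=a\Bigl(a+\tfrac b2-2\Bigr)+2,
\qquad\text{that is,}\qquad
a\Bigl(\tfrac b2+1\Bigr)=1 .
\]
This forces $a=1$ and $b=0$, which contradicts $d^{(1)}_{v_1}=a+b\ge2$. Similar short computations should dispose of most pairs by integrality or by $a\ge0$. Pairs that mix an edge of weight $1$ with an edge of weight $2$ produce a factor $\tfrac12$, so parity arguments are needed there. In those cases I also expect to have to use the equation at $v_1$ and $v_1'$ directly: $s(v_1)=w(v_1,v_2)\,d(v_2)+\sum(\text{pendent contributions})$ must equal $ad(v_1)+b$ for both vertices with the same $d(v_2)$. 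Doing so ties $d(v_1)$ to $d(v_1')$.

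The main obstacle is the pairs where the $d(v_2)$ formulas can genuinely agree for suitable $a,b$; for instance, (1) versus (3) gives $a(a+b-1)=0$. In such cases, equality of $d(v_2)$ alone does not decide the matter, so I would bring in the remaining hypothesis that $v_2$ is a $(v_3,3)$-vertex. Then $v_3$ is the only neighbour of $v_2$ that is not of the form just analysed, and every other neighbour of $v_2$ is either pendent or a $(v_2,2)$-vertex. Writing \eqref{eq-s(v)-2} at $v_2$ as
\[
ad(v_2)+b=w(v_2,v_3)\,d(v_3)+\sum_{u\in N(v_2)\backslash\{v_3\}} w(u,v_2)\,d(u),
\]
with each $d(u)$ given by Lemma~\ref{prop-type}, and using $a\ge0$ and the lower bounds such as $d(v_1)\ge2$, should exclude the remaining degenerate solutions. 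If that fails, I would handle the leftover small values of $a$ (namely $a\in\{0,1\}$) by direct inspection.
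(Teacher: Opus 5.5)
Your plan is correct and follows the route the paper points to: the paper only cites \cite{Du-4}, but the Remark after the lemma says the proof there shows that the values $d^{(i)}_{v_2}$ cannot coincide for $i\neq j$. However, you have put the difficulty in the wrong place.

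The pair (1)/(3) is not an obstacle. The equation $a(a+b-1)=0$, together with $d(v_1)=a+b\ge 2$, forces $a=0$ and hence $d(v_2)=1$. This is impossible, because $v_2$ has the three distinct neighbours $v_1,v_1',v_3$. The same bound $d(v_2)\ge 3$ disposes of (2)/(4). Every other pair except one closes by integrality, parity, or the size bounds in Lemma~\ref{prop-type} (including $k_1,k_2\ge 1$).

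The exception is (2)/(3), with $v_1$ of type (2) and $v_1'$ of type (3). Equating the two formulas gives $(a-1)(a-2)=0$. Both $a=1$, $b=4k$ $(k\ge 1)$ and $a=2$, $b=4m+2$ $(m\ge 0)$ satisfy every condition of types (2) and (3), with common value $d(v_2)=2k+1$, respectively $d(v_2)=4m+4$. So here your fallback of writing \eqref{eq-s(v)-2} at $v_2$ is genuinely needed. Every neighbour $u\notin\{v_1,v_1'\}$ of $v_2$ contributes $w(u,v_2)d(u)\ge w(u,v_2)$, so $s(v_2)\ge d(v_1)+2d(v_1')+d(v_2)-3$. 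This gives $s(v_2)\ge 12k+1>6k+1=ad(v_2)+b$, respectively $s(v_2)\ge 14m+12>12m+10=ad(v_2)+b$, a contradiction in both cases.

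The argument therefore goes through. Note, though, that the pairwise distinctness of the $d^{(i)}_{v_2}$ is not an identity in $a,b$ alone. It depends on this use of the equation at $v_2$, so you should state that step explicitly for (2)/(3) rather than as a contingency.
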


\begin{remark}\label{remark-different}
  By the proof of Lemma $\ref{lem-different-1-4}$ in {\rm \cite{Du-4}}, we know that $d_{v_2}^{(i)}\neq d_{v_2}^{(j)}$ holds for any $i,j\in\{1,2,3,4,5,6\}$ and $i\neq j$.
\end{remark}

\begin{lemma}{\rm (\cite{Du-4})}\label{lemma-dv1-dv2}
  Let $H$, $a$, $b$ $(\neq0)$ be as in Proposition $\ref{prop-a-b-fenlei}$, $Q=u_0u_1u_2\ldots u_l$ be a path of $H$ with $l\geq3$. Then we have:\\
  {\rm (1)} if $u_0$ is a $(u_1,1)$-vertex, then $d(u_1)\in\{a+b,a+\frac{b}{2}\}$; \\
  {\rm (2)} if $u_1$ is a $(u_2,2)$-vertex, then $d(u_2)=d_{v_2}^{(i)}$ for some $i\in\{1,2,3,4\}$.
\end{lemma}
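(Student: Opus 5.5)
Part (1) is a local computation at $u_1$. Apply \eqref{eq-s(v)-2} to $u_0$: since $u_0$ is pendent with unique neighbour $u_1$, we get $ad(u_0)+b=w(u_0,u_1)d(u_1)$. Here $d(u_0)=w(u_0,u_1)\in\{1,2\}$. If $w(u_0,u_1)=1$, this gives $d(u_1)=a+b$. If $w(u_0,u_1)=2$, it gives $2d(u_1)=2a+b$, so $d(u_1)=a+\frac{b}{2}$. This proves (1). The hypothesis $l\geq3$ is used only to guarantee that $u_1$ is not itself pendent, so the situation is the generic one.

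For (2), suppose $u_1$ is a $(u_2,2)$-vertex. By definition every vertex of $N(u_1)\backslash\{u_2\}$ is then a pendent vertex, so the pair $u_1,u_2$ satisfies the hypothesis of Lemma \ref{prop-type}. Hence $u_1,u_2$ satisfy one of (1)--(6), and $d(u_2)=d^{(i)}_{v_2}$ for some $i\in\{1,\dots,6\}$. It remains to exclude $i=5$ and $i=6$.

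In cases (5) and (6), $u_1$ has two pendent neighbours $v,v'$ with $w(v,u_1)=1$ and $w(v',u_1)=2$. Apply Lemma \ref{lem-b=0}, or directly part (1) to both of them: this forces $d(u_1)=a+b=a+\frac{b}{2}$, so $b=0$. That contradicts $b\neq0$. Therefore $i\in\{1,2,3,4\}$.

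The main obstacle is that Lemma \ref{lem-b=0} is stated for neighbours of $v_1$ on a longest path, while here $u_1$ sits on an arbitrary path. So I would not invoke the lemma itself but rerun its one-line argument: the two instances of \eqref{eq-s(v)-2} at the pendent vertices $v$ and $v'$ give $d(u_1)=a+b$ and $2d(u_1)=2a+b$. This uses only that $v,v'$ are pendent, and the path $Q$ plays no role beyond the structural assumptions. Similarly, I would check that Lemma \ref{prop-type} was derived using only local equations at $v_1$ and its pendent neighbours. If so, it applies verbatim to $u_1,u_2$.
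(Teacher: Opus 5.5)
Your proof is correct. Part (1) is exactly the instance of \eqref{eq-s(v)-2} at the pendent vertex $u_0$. Part (2) is Lemma \ref{prop-type} applied to the pair $u_1,u_2$, whose hypothesis is purely local as you suspected; types (5) and (6) are ruled out because pendent neighbours of $u_1$ of both weights force $d(u_1)=a+b=a+\frac{b}{2}$, which is the one-line argument behind Lemma \ref{lem-b=0}. The paper itself gives no proof and only cites \cite{Du-4}, so I cannot check against the original. Your argument is the natural one built from exactly the ingredients the paper states just before this lemma, and nothing is missing.
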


\section{Main results}\label{section-main-results}

In this section, we study the case where the aforementioned (0,1,2)-multigraph $H$ that has a unicyclic $B$-graph and exactly two main eigenvalues. Since $H^\circ$ (the $B$-graph of $H$) is a unicyclic graph, we decompose its vertex set as \begin{equation*}
  V(H)=V(H^\circ)=V(C)\cup V(F),
\end{equation*} where $C$ is the unique cycle of $H^\circ$ and $F$ is a forest. Notably, $|V(F)|=0$ is a valid possibility.

Similar to Proposition \ref{prop-a-b-fenlei}, we have the following result.

\begin{proposition}\label{prop-a-b-fenlei-unicyclic}
  Let $H$ be a $(0,1,2)$-multigraph of order $n$ $(\geq3)$ with exactly two distinct main eigenvalues, and suppose its $B$-graph is unicyclic. Let $a,b$ be integers that satisfy \eqref{eq-s(v)-2} for any $v\in V(H)$. Then the structural analysis of $H$ reduces to the following four cases: {\rm (1)} $a=0$ and $b>0$; {\rm (2)} $a=1$ and $b\neq0$; {\rm (3)} $a\geq2$ and $b\neq0$; {\rm (4)} $a>0$ and $b=0$.
\end{proposition}

In Subsections \ref{subsection-F=0} and \ref{subsection-F>0}, we study cases $|V(F)|=0$ and $|V(F)|\neq0$, respectively.

\subsection{$|V(F)|=0$}\label{subsection-F=0}

Let $C_n$ be the cycle of order $n$ $(\geq3)$. Since $C_n$ is $2$-regular and has exactly one main eigenvalue $2$, the following remark is obvious.

\begin{remark}\label{remark-C-n}
  Let $H,a,b$ be as in Proposition $\ref{prop-a-b-fenlei-unicyclic}$. If the $B$-graph of $H$ is $C_n$, then there exist vertices $u,v\in V(H)$ such that $w(u,v)=2$.
\end{remark}

In this subsection, we let $V(H)=V(C)=\{u_1,u_2,\ldots,u_{\frac{n}{2}},v_1,v_2,\ldots,v_{\frac{n}{2}}\}$ and $u_1u_2\cdots u_{\frac{n}{2}}$ $v_{\frac{n}{2}}\cdots v_2$ $v_1u_1$ be a walk of $H$ if $n$ is even, $V(H)=V(C)=\{u_1,u_2,\ldots,u_{\frac{n-1}{2}},v_1,v_2,\ldots,v_{\frac{n-1}{2}},$ $x\}$ and $u_1u_2\cdots u_{\frac{n-1}{2}}$ $xv_{\frac{n-1}{2}}\cdots v_2v_1u_1$ be a walk of $H$ if $n$ is odd.

By Remark \ref{remark-C-n}, we can suppose $w(u_1,v_1)=2$. Then one of the following four cases holds:

(i) $w(u_1,u_2)=1$ and $w(v_1,v_2)=1$;

(ii) $w(u_1,u_2)=1$ and $w(v_1,v_2)=2$;

(iii) $w(u_1,u_2)=2$ and $w(v_1,v_2)=1$;

(iv) $w(u_1,u_2)=2$ and $w(v_1,v_2)=2$.

\begin{lemma}\label{lemma-symmetric}
  Let $H,a,b$ be as in Proposition $\ref{prop-a-b-fenlei-unicyclic}$, where $V(H)=V(C)$. If $w(u_1,u_2)=w(v_1,v_2)$. Then $w(u_i,u_{i+1})=w(v_i,v_{i+1})$ for $2\leq i\leq \lfloor\frac{n}{2}\rfloor-1$. Moreover, $w(u_{\frac{n-1}{2}},x)=w(v_{\frac{n-1}{2}},x)$ if $n$ is odd.
\end{lemma}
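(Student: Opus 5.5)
We argue by induction along the two arcs of the cycle, propagating the symmetry from the edge $u_1v_1$ outward. On a cycle each vertex $y$ has exactly two neighbours $y^-,y^+$. Then \eqref{eq-s(v)-2} at $y$ reads
\begin{equation*}
a\bigl(w(y,y^-)+w(y,y^+)\bigr)+b = w(y,y^-)d(y^-)+w(y,y^+)d(y^+).
\end{equation*}
Since $d(y^+)=w(y,y^+)+w(y^+,y^{++})$, this equation lets us solve for the next weight $w(y^+,y^{++})$ in terms of $w(y^-,y)$, $d(y^-)$ and $w(y,y^+)$. The last weight enters only through the term $w(y,y^+)w(y^+,y^{++})$, and its coefficient $w(y,y^+)\in\{1,2\}$ is nonzero. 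Hence the edge weights are determined recursively: the triple $(w(y^{--},y^-), w(y^-,y), w(y,y^+))$ determines $w(y^+,y^{++})$.

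For the base case, $u_1$ and $v_1$ play symmetric roles. We have $w(u_1,v_1)=2$ and $w(u_1,u_2)=w(v_1,v_2)$ by hypothesis, so $d(u_1)=d(v_1)$. We will first show $w(u_2,u_3)=w(v_2,v_3)$. Apply the equation at $u_1$, with neighbours $v_1,u_2$:
\begin{equation*}
a\,d(u_1)+b=2d(v_1)+w(u_1,u_2)d(u_2),
\end{equation*}
and at $v_1$:
\begin{equation*}
a\,d(v_1)+b=2d(u_1)+w(v_1,v_2)d(v_2).
\end{equation*}
Subtracting, and using $d(u_1)=d(v_1)$ together with $w(u_1,u_2)=w(v_1,v_2)\neq0$, gives $d(u_2)=d(v_2)$. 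Therefore $w(u_2,u_3)=d(u_2)-w(u_1,u_2)=d(v_2)-w(v_1,v_2)=w(v_2,v_3)$.

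For the inductive step, suppose $w(u_{i-1},u_i)=w(v_{i-1},v_i)$ and $w(u_i,u_{i+1})=w(v_i,v_{i+1})$, so that $d(u_i)=d(v_i)$, and also $d(u_{i-1})=d(v_{i-1})$, which holds since the earlier edge weights match. Compare the equations at $u_i$ and $v_i$. The left sides agree, and the terms $w(u_{i-1},u_i)d(u_{i-1})$ and $w(v_{i-1},v_i)d(v_{i-1})$ agree. Cancelling the nonzero common factor $w(u_i,u_{i+1})$ then gives $d(u_{i+1})=d(v_{i+1})$, hence $w(u_{i+1},u_{i+2})=w(v_{i+1},v_{i+2})$. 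This runs as long as $u_{i+2},v_{i+2}$ are genuine path vertices distinct from the "other side", which covers the claim for $2\le i\le\lfloor n/2\rfloor-1$.

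The main obstacle is handling the indexing near the far end of the cycle, where the two arcs meet. For $n$ even the arcs close up via the edge $u_{n/2}v_{n/2}$, which is shared. The last step needed is $w(u_{n/2-1},u_{n/2})=w(v_{n/2-1},v_{n/2})$, and it comes from $d(u_{n/2-1})=d(v_{n/2-1})$ obtained at the previous stage, subtracting the matching earlier weights; we must check that the induction produces exactly this degree equality before reaching the shared edge. For $n$ odd we continue one more step. The equations at $u_{(n-1)/2-1}$ and $v_{(n-1)/2-1}$ give $d(u_{(n-1)/2})=d(v_{(n-1)/2})$. Since $w(u_{(n-1)/2-1},u_{(n-1)/2})=w(v_{(n-1)/2-1},v_{(n-1)/2})$ is already known, subtraction yields $w(u_{(n-1)/2},x)=w(v_{(n-1)/2},x)$.

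We must also treat the small cases where $\lfloor n/2\rfloor\le 2$, in which the range $2\le i\le\lfloor n/2\rfloor-1$ is empty. For $n=3$ the odd statement follows directly from the base computation: the equations at $u_1,v_1$ give $d(u_1)=d(v_1)$, and hence $w(u_1,x)=w(v_1,x)$ is just the hypothesis. For $n=5$ it follows from $d(u_2)=d(v_2)$. We will verify these separately so that no index falls off the end.
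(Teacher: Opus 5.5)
Your proposal is correct and follows the paper's proof. Both arguments start from $d(u_1)=d(v_1)$, subtract \eqref{eq-s(v)-2} at the mirror vertices $u_i,v_i$ to get $d(u_{i+1})=d(v_{i+1})$, read off $w(u_{i+1},u_{i+2})=w(v_{i+1},v_{i+2})$, and proceed by induction; your explicit handling of where the two arcs meet and of the small cases $n\in\{3,4,5\}$ is more careful than the paper, which calls the odd case ``not difficult to see''.
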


\begin{proof}
  It is clear that $d(u_1)=d(v_1)$. We prove this by inductive hypothesis on $i$ $(2\leq i\leq \lfloor\frac{n}{2}\rfloor-1)$.

  By \eqref{eq-s(v)-2}, we have
  \begin{equation*}
    \begin{cases}
      a\cdot d(u_1)+b=s(u_1)=2d(v_1)+w(u_1,u_2)d(u_2)\\
      a\cdot d(v_1)+b=s(v_1)=2d(u_1)+w(v_1,v_2)d(v_2).
    \end{cases}
  \end{equation*} Then $d(u_2)=d(v_2)$ by $d(u_1)=d(v_1)$ and $w(u_1,u_2)=w(v_1,v_2)$. Clearly, $w(u_2,u_3)=w(v_2,v_3)$ by $d(u_2)=d(v_2)$ and $w(u_1,u_2)=w(v_1,v_2)$. Then the result holds for $i=2$.

  Suppose that the result holds for $i\leq k$ $(2\leq k\leq\lfloor\frac{n}{2}\rfloor-2)$. By induction hypothesis, we have $w(u_{k-2},u_{k-1})=w(v_{k-2},v_{k-1})$, $w(u_{k-1},u_k)=w(v_{k-1},v_k)$ and $w(u_k,u_{k+1})=w(v_k,v_{k+1})$, thus $d(u_{k-1})=d(v_{k-1})$ and $d(u_k)=d(v_k)$. Then by \eqref{eq-s(v)-2},  we have \begin{equation*}
    \begin{cases}
      a\cdot d(u_k)+b=s(u_k)=w(u_{k-1},u_k)d(u_{k-1})+w(u_k,u_{k+1})d(u_{k+1}),\\
      a\cdot d(v_k)+b=s(v_k)=w(v_{k-1},v_k)d(v_{k-1})+w(v_k,v_{k+1})d(v_{k+1}),
    \end{cases}
  \end{equation*} which implies $d(u_{k+1})=d(v_{k+1})$. Combining with $w(u_k,u_{k+1})=w(v_k,v_{k+1})$, we have $w(u_{k+1},u_{k+2})=w(v_{k+1},v_{k+2})$. Then the result holds for $i=k+1$.

  According the above discussion, it is not difficult to see that $w(u_{\frac{n-1}{2}},x)=w(v_{\frac{n-1}{2}},x)$ if $n$ is odd, then we complete the proof.
\end{proof}

\vskip0.2cm
In Subsections \ref{subsection-Cn-1}, \ref{subsection-Cn-2}, \ref{subsection-Cn-3} and \ref{subsection-Cn-4}, we characterize $H$ with $B$-graph being $C_n$, corresponding to case (i), (ii), (iii) and (iv), respectively.

Now, we define seven types of $(0,1,2)$-multigraphs, denoted as $U^1_{4t},U^2_{3t},U^3_{3t},U^4_{5t},U^5_{4t},U^6_{2t},$ $U^7_{t}$ (see Figure \ref{U1-U7}), where $t\geq1$. We use $U=[n_1,n_2]$ to denote a multigraph $U$ that is formed by the cyclic connection of subgraphs $U'$. Here, $n_1\geq1$ and $n_2\geq0$ are integers, and the subgraph $U'$ is constructed by sequentially connecting $n_1$ consecutive parallel edges and $n_2$ consecutive single edges. The term ``cyclic connection'' here means repeating the connection of the subgraph $U'$ in a certain order to form the complete multigraph $U$. All seven types of multigraphs follow the aforementioned generation method of cyclic connection of parallel edges and single edges. In fact, $U^1_{4t}=[1,3]$, $U^2_{3t}=[1,2]$, $U^3_{3t}=[2,1]$, $U^4_{5t}=[3,2]$, $U^5_{4t}=[3,1]$, $U^6_{2t}=[1,1]$ and $U^7_{t}=[1,0]$. It is clear that $t$ is the number of $U'$ in $U$, and thus $|V(U)|=(n_1+n_2)t$.

\begin{proposition}\label{prop-U1-U7}
  Let $U^1_{4t},U^2_{3t},U^3_{3t},U^4_{5t},U^5_{4t},U^6_{2t}$ and $U^7_{t}$ $(t\geq1)$ be defined as above. Then $U^1_{4t},U^2_{3t},U^3_{3t},U^4_{5t},U^5_{4t}$ have exactly two main eigenvalues, $U^6_{2t},U^7_{t}$ have exactly one main eigenvalues.
\end{proposition}

\begin{proof}
  We observe that $U^6_{2t}$ and $U^7_{t}$ are regular, thus they have exactly one main eigenvalues.  For $U^1_{4t},U^2_{3t},U^3_{3t},U^4_{5t}$ and $U^5_{4t}$, it easy to verify that $a\cdot d(v)+b=s(v)$ for any vertex $v\in U^i$ $(1\leq i\leq 5)$, with $a,b$ values in Table \ref{table-U1-U5}. Thus, $U^1_{4t},U^2_{3t},U^3_{3t},U^4_{5t}$ and $U^5_{4t}$ have exactly two main eigenvalues by Theorem \ref{lemma-s(v)}.
\end{proof}

\begin{table}[h]
\renewcommand{\arraystretch}{1.5}\caption{The values of $a,b$.}\label{table-U1-U5}\vskip0.05cm
\centering
{
\begin{tabular}{cccccc}\hline\hline
$Graph$& $U^1_{4t}$ &$U^2_{3t}$ &$U^3_{3t}$ &$U^4_{5t}$ &$U^5_{4t}$ \\
\hline\hline
$a$& 3&2&1&4&3\\
\hline
$b$& -1&2&8&-2&2\\
\hline\hline
\end{tabular}}
\end{table}

\begin{figure}[!h]
	\centering
	\begin{tikzpicture}
		\node[anchor=south west,inner sep=0] (image) at (0,0) {\includegraphics[width=0.9\textwidth]{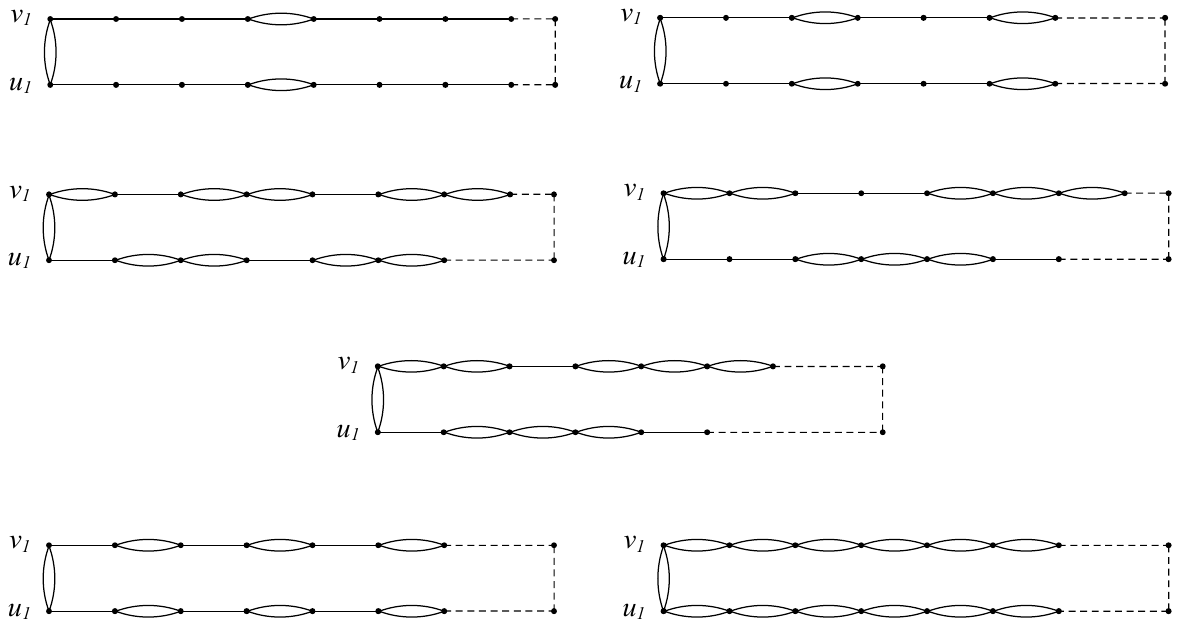}};
		\begin{scope}[
			x={(image.south east)},
			y={(image.north west)}
			]
			\node [black, font=\bfseries] at (0.25,0.8) {$U^1_{4t}$};
            \node [black, font=\bfseries] at (0.75,0.8) {$U^2_{3t}$};
            \node [black, font=\bfseries] at (0.25,0.55) {$U^3_{3t}$};
			\node [black, font=\bfseries] at (0.75,0.55) {$U^4_{5t}$};
            \node [black, font=\bfseries] at (0.5,0.3) {$U^5_{4t}$};
            \node [black, font=\bfseries] at (0.25,0.05) {$U^6_{2t}$};
            \node [black, font=\bfseries] at (0.75,0.05) {$U^7_{t}$};
		\end{scope}
	\end{tikzpicture}
	\caption{The graphs $U^1_{4t},U^2_{3t},U^3_{3t},U^4_{5t},U^5_{4t},U^6_{2t},U^7_{t}$ with $t\geq1$.}\label{U1-U7}
\end{figure}

\subsubsection{$w(u_1,u_2)=1$ and $w(v_1,v_2)=1$}\label{subsection-Cn-1}

\begin{theorem}
  Let $H,a,b$ be as in Proposition $\ref{prop-a-b-fenlei-unicyclic}$, where $V(H)=V(C)$. If $w(u_1,u_2)=1$ and $w(v_1,v_2)=1$, then $H\in \{U^1_{4t},U^2_{3t}\}$ $(t\geq1)$.
\end{theorem}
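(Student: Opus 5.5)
We will work with the cyclic sequence of edge multiplicities along $C$. Since the $B$-graph is $C_n$, every vertex has exactly two neighbours, and both incident multiplicities lie in $\{1,2\}$. Hence $d(v)\in\{2,3,4\}$ for every vertex. Condition \eqref{eq-s(v)-2} at a vertex $v$ with neighbours $v^-,v^+$ then reads
\[
a\bigl(w(v^-,v)+w(v,v^+)\bigr)+b=w(v^-,v)\,d(v^-)+w(v,v^+)\,d(v^+).
\]
The key observation is a determinism property. Suppose three consecutive multiplicities $w(v^{--},v^-)$, $w(v^-,v)$, $w(v,v^+)$ and the pair $(a,b)$ are known. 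Then $d(v^+)$ is determined by the displayed equation, because $w(v,v^+)\neq0$. It follows that the next multiplicity $w(v^+,v^{++})=d(v^+)-w(v,v^+)$ is determined as well. So once $(a,b)$ and a short initial window are fixed, the whole cyclic sequence is forced. By Lemma~\ref{lemma-symmetric} the sequence is symmetric about the edge $u_1v_1$, so it suffices to propagate along $u_1,u_2,u_3,\dots$.

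The first step is to start from $w(u_1,v_1)=2$ and $w(u_1,u_2)=w(v_1,v_2)=1$. Then $d(u_1)=d(v_1)=3$, and the equation at $u_1$ gives $3a+b=6+d(u_2)$ with $d(u_2)\in\{2,3\}$. We split into two cases.

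\emph{Case $d(u_2)=3$.} Then $w(u_2,u_3)=2$, and $3a+b=9$. Propagating forces the alternating pattern $2,1,2,1,\dots$ up to the symmetric closure. This is $U^6_{2t}$, which is regular, so $j$ is an eigenvector and Theorem~\ref{lemma-s(v)}(3) is violated. The case is excluded, with short cycles ($n=3,4$, where $u_2$ and $v_2$ or $x$ coincide) checked by hand.

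\emph{Case $d(u_2)=2$.} Then $w(u_2,u_3)=1$ and $3a+b=8$. The equation at $u_2$ gives $2a+b=3+d(u_3)$, which splits further.
\begin{itemize}
\item If $d(u_3)=3$, then $a=2$ and $b=2$. The next multiplicity is $2$, and propagation yields the period $2,1,1$, i.e.\ $[1,2]$, which is $U^2_{3t}$.
\item If $d(u_3)=2$, then $a=3$ and $b=-1$. The equation at $u_3$ gives $5=2+d(u_4)$, so $d(u_4)=3$ and the next multiplicity is $2$. Propagation yields the period $2,1,1,1$, i.e.\ $[1,3]$, which is $U^1_{4t}$.
\end{itemize}

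In each surviving case, with $(a,b)$ now fixed, we must show that the forced continuation never produces a vertex incident to two parallel edges. Such a vertex would have $d=4$ and would require an equation of the form $4a+b=2d(v^-)+2d(v^+)$. For each of the two values of $(a,b)$ we check directly that no admissible neighbour degrees satisfy this equation, and that no other deviation from the period is possible. Consequently the sequence is purely periodic. Because it must close up into a cycle that is symmetric about $u_1v_1$ (Lemma~\ref{lemma-symmetric}), $n$ must be a multiple of the period, namely $n=3t$ or $n=4t$.

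I expect the main obstacle to be this closing-up argument. The propagation is easy locally, but we must carefully handle the wrap-around where the $u$-side and $v$-side meet: the vertex $x$ for odd $n$, and the edge $u_{n/2}v_{n/2}$ for even $n$. This requires checking that the vertex equations there are satisfied only when $n$ is divisible by the period, together with the small cases $n\le 5$. For these I would simply enumerate all weightings of $C_n$ with $w(u_1,v_1)=2$ and $w(u_1,u_2)=w(v_1,v_2)=1$ and test \eqref{eq-s(v)-2} directly.
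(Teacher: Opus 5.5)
Your proposal is correct and follows the paper's proof: the same split on the multiplicity of the edge after the first single edge (you work on the $u$-side, the paper on the $v$-side), the same constraints $3a+b=9$ (giving the regular $U^6_{2t}$, which is excluded) and $(a,b)\in\{(3,-1),(2,2)\}$, and the same forward propagation to the periodic patterns $[1,3]$ and $[1,2]$. Your determinism observation also removes the wrap-around obstacle you flag: the cyclic multiplicity sequence, extended $n$-periodically, satisfies the recurrence at every vertex, so it equals the forced sequence of minimal period $4$ (resp.\ $3$), and hence the period divides $n$ with no small-case enumeration and no separate check excluding doubled vertices.
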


\begin{proof}
  It is clear that $d(v_1)=d(u_1)=3$. By \eqref{eq-s(v)-2}, we have
  \begin{equation}\label{eq-type1}
  \begin{cases}
    ad(v_1)+b=s(v_1)=w(u_1,v_1)d(u_1)+w(v_1,v_2)d(v_2),\\ ad(v_2)+b=s(v_2)=w(v_1,v_2)d(v_1)+w(v_2,v_3)d(v_3).
  \end{cases}
  \end{equation} Then we complete the proof by the following two cases.

  \textbf{Case 1.} $w(v_2,v_3)=1$.

  Now $d(v_2)=2$. Then by \eqref{eq-type1}, we have $3a+b=s(v_1)=2d(u_1)+d(v_2)=8$ and $2a+b=s(v_2)=3+d(v_3)$, which implies $a=5-d(v_3)$. It is clear that $d(v_3)=2$ if $w(v_3,v_4)=1$ and $d(v_3)=3$ if $w(v_3,v_4)=2$.

  \textbf{Subcase 1.1.} $w(v_3,v_4)=1$.

  Now $a=3$, and then $b=-1$ by $3a+b=8$. By \eqref{eq-s(v)-2}, we have $a\cdot d(v_3)+b=s(v_3)=d(v_2)+d(v_4)$, which implies $d(v_4)=3$. Then $w(v_4,v_5)=2$ by $d(v_4)=3$ and $w(v_3,v_4)=1$.

  By applying \eqref{eq-s(v)-2} to $s(v_4)$, $s(v_5)$, $s(v_6)$, $s(v_7)$ and $s(v_8)$ in turn, we can find that $H[v_1,v_2,v_3,v_4,v_5]\cong H[v_5,v_6,v_7,v_8,v_9]$. And so on, it is clear that $H[v_5,v_6,v_7,v_8,v_9]\cong H[v_9,v_{10},v_{11},v_{12},v_{13}]\cong \cdots$. Combining with Lemma \ref{lemma-symmetric}, we conclude that $H$ is generated by repeatedly extending $H[v_1,v_2,v_3,v_4,v_5]$. Thus $n=|V(H)|=|E(H)|=|E(H[v_1,v_2,v_3,v_4,v_5])|t=4t$ $(t\geq 1)$ is even.

  By above discussion and the structure of $H$, it is easy to check that  $H\cong U^1_{4t}$ ($t\geq1$).

 \textbf{Subcase 1.2.} $w(v_3,v_4)=2$.

  Now $a=2$, and then $b=2$ by $3a+b=8$. By \eqref{eq-s(v)-2}, we have $a\cdot d(v_3)+b=s(v_3)=d(v_2)+2d(v_4)$, which implies $d(v_4)=3$. Then $w(v_4,v_5)=1$ by $d(v_4)=3$ and $w(v_3,v_4)=2$.

  By applying \eqref{eq-s(v)-2} to $s(v_4)$, $s(v_5)$, $s(v_6)$ and $s(v_7)$ in turn, we can find that $H[v_1,v_2,v_3,v_4]$ $\cong H[v_4,v_5,v_6,v_7]$. And so on, it is clear that $H[v_4,v_5,v_6,v_7]\cong H[v_7,v_8,v_9,v_{10}]\cong \cdots$. Combining with Lemma \ref{lemma-symmetric}, we conclude that $H$ is generated by repeatedly extending $H[v_1,v_2,v_3,v_4]$. Thus, $n=|E(H[v_1,v_2,v_3,v_4])|t=3t$ is odd and $H\cong U^2_{3t}$ ($t\geq1$).

  \textbf{Case 2.} $w(v_2,v_3)=2$.

  Now $d(v_2)=3$. Then by \eqref{eq-type1}, we have $3a+b=s(v_1)=2d(u_1)+d(v_2)=9$ and $3a+b=s(v_2)=3+2d(v_3)$, which implies $d(v_3)=3$. Then $w(v_3,v_4)=1$ by $d(v_3)=3$ and $w(v_2,v_3)=2$. By applying \eqref{eq-s(v)-2} to $s(v_3)$ and $s(v_4)$ in turn, we can find that $H[v_1,v_2,v_3]\cong H[v_3,v_4,v_5]$. And so on, it is clear that $H[v_3,v_4,v_5]\cong H[v_5,v_6,v_7]\cong \cdots$. Combining with Lemma \ref{lemma-symmetric}, we conclude that $H$ is generated by repeatedly extending $H[v_1,v_2,v_3]$. Thus, $n=|E(H[v_1,v_2,v_3])|t=2t$ is even and $H\cong U^6_{2t}$ ($t\geq1$). However, $U^6_{2t}$ is $3$-regular and has exactly one main eigenvalue, a contradiction.
\end{proof}

\subsubsection{$w(u_1,u_2)=1$ and $w(v_1,v_2)=2$}\label{subsection-Cn-2}

\begin{theorem}\label{theorem-cycle-2type}
  Let $H,a,b$ be as in Proposition $\ref{prop-a-b-fenlei-unicyclic}$, where $V(H)=V(C)$. If $w(u_1,u_2)=1$ and $w(v_1,v_2)=2$, then $H\in\{U^3_{3t},U^4_{5t},U^5_{4t}\}$ $(t\geq1)$.
\end{theorem}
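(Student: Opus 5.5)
The approach is to exploit the fact that, once the two weights adjacent to the double edge $u_1v_1$ are fixed, the equation \eqref{eq-s(v)-2} at each vertex of the cycle determines the degree of the next vertex. Hence it determines the next edge weight, because every vertex of $C$ has exactly two neighbours and its degree is the sum of the two incident weights, each in $\{1,2\}$. Here $w(u_1,v_1)=2$, $w(u_1,u_2)=1$ and $w(v_1,v_2)=2$, so $d(u_1)=3$ and $d(v_1)=4$. Writing \eqref{eq-s(v)-2} at $u_1$ and $v_1$ gives
\begin{equation*}
3a+b=2d(v_1)+d(u_2)=8+d(u_2),\qquad 4a+b=2d(u_1)+2d(v_2)=6+2d(v_2).
\end{equation*}
Subtracting, $a=2d(v_2)-2-d(u_2)$, where $d(u_2)\in\{2,3\}$ and $d(v_2)\in\{3,4\}$.

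This leaves exactly four cases, according to the values of $d(u_2)$ and $d(v_2)$.
\begin{enumerate}
\item[(A)] $d(u_2)=2$, $d(v_2)=3$. Then $(a,b)=(2,4)$.
\item[(B)] $d(u_2)=3$, $d(v_2)=3$. Then $(a,b)=(1,8)$, which matches $U^3_{3t}$ in Table \ref{table-U1-U5}.
\item[(C)] $d(u_2)=2$, $d(v_2)=4$. Then $(a,b)=(4,-2)$, which matches $U^4_{5t}$.
\item[(D)] $d(u_2)=3$, $d(v_2)=4$. Then $(a,b)=(3,2)$, which matches $U^5_{4t}$.
\end{enumerate}

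Case (A) should be killed by a short propagation. Here $w(v_2,v_3)=1$, and at $v_2$ we get $2\cdot3+4=2\cdot4+d(v_3)$, so $d(v_3)=2$. At $v_3$ we then get $2\cdot 2+4=3+d(v_4)$, so $d(v_4)=5$. This is impossible, since every degree on $C$ lies in $\{2,3,4\}$. The same contradiction appears on the $u$-side, because at $u_2$ we get $d(u_3)=5$. If the cycle is too short for $v_4$ or $u_3$ to be distinct from earlier vertices, I would check the small values of $n$ directly against the listed degrees.

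In cases (B)--(D), $a$ and $b$ are fixed, and I would apply \eqref{eq-s(v)-2} successively at $v_2,v_3,\ldots$ (and symmetrically along $u_2,u_3,\ldots$). This shows that the pattern of weights is periodic, with blocks $[2,1]$, $[3,2]$ and $[3,1]$ of lengths $3$, $5$ and $4$ respectively. It is the same ``$H[v_1,\dots]\cong H[\dots]$'' argument used in the previous theorem.

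The main obstacle is the closing-up step. Lemma \ref{lemma-symmetric} is not available here, because $w(u_1,u_2)\neq w(v_1,v_2)$. So one must verify that the sequence forced going forward from $v_1$ and the one forced going backward from $u_1$ are compatible, and that they meet consistently around the cycle. To handle this, I would observe that the $u$-side, read backwards from $u_1$, must coincide with the continuation of the same periodic pattern. Indeed, in each of the three cases the pair $(w(u_1,u_2),w(u_1,v_1))=(1,2)$ is exactly the junction of a single edge entering a block of parallel edges in that pattern. Consequently the whole cycle is a concatenation of identical blocks, so $n$ is a multiple of the block length. This yields $H\cong U^3_{3t}$, $U^4_{5t}$ or $U^5_{4t}$. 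Any leftover partial block would produce a vertex whose forced degree contradicts \eqref{eq-s(v)-2}, by the same kind of computation as in case (A).
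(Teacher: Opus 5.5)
Your proposal is correct and follows essentially the same route as the paper. The paper splits on $w(v_2,v_3)$ and then on $d(u_2)$, which gives exactly your four cases (A)--(D) with the same values of $(a,b)$; it kills (A) by the same computation $d(v_4)=5$, and it obtains $U^3_{3t}$, $U^4_{5t}$, $U^5_{4t}$ by the same propagation of \eqref{eq-s(v)-2} along both sides of the cycle.

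Your closing-up step is, if anything, more explicit than the paper's. It is cleanest to say that \eqref{eq-s(v)-2} at each cycle vertex is a reversible recurrence: three consecutive edge weights together with $(a,b)$ force the next weight and the previous one. Hence the triple $(1,2,2)$ fixes the whole cyclic sequence, and $n$ must be a multiple of the minimal period.
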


\begin{proof}
  It clear that $d(u_1)=3$ and $d(v_1)=4$. By \eqref{eq-s(v)-2}, we have
  \begin{equation}\label{eq-type2}
  \begin{cases}
    ad(u_1)+b=s(u_1)=w(u_1,v_1)d(v_1)+w(u_1,u_2)d(u_2),\\ ad(v_1)+b=s(v_1)=w(v_1,u_1)d(u_1)+w(v_1,v_2)d(v_2).
  \end{cases}
  \end{equation} Then we complete the proof by the following two cases.

  \textbf{Case 1.} $w(v_2,v_3)=1$.

  Now $d(v_2)=3$. By \eqref{eq-type2}, we have $3a+b=8+d(u_2)$ and $4a+b=12$, which implies $a=4-d(u_2)$. Since $w(u_1,u_2)=1$, we have $d(u_2)\in\{2,3\}$, and thus $a\in\{1,2\}$.

  By \eqref{eq-s(v)-2}, we have $a\cdot d(v_2)+b=s(v_2)=2d(v_1)+d(v_3)$, which implies $3a+b=8+d(v_3)$. Thus, $d(u_2)=d(v_3)$.

  If $d(u_2)=d(v_3)=2$, then $a=2$, and further $b=4$ by $3a+b=8+d(v_3)$. It is also known that $w(v_3,v_4)=1$ by $w(v_2,v_3)=1$ and $d(v_3)=2$. Then by \eqref{eq-s(v)-2}, we have $ad(v_3)+b=s(v_3)=w(v_2,v_3)d(v_2)+w(v_3,v_4)d(v_4)$, which implies $d(v_4)=5$. However, it is clear that $d(v)\leq4$ for any $v\in V(H)$ since the $B$-graph of $H$ is $C_n$, a contradiction.

  If $d(u_2)=d(v_3)=3$, then it is easy to check that $a=1$, $b=8$ and  $w(v_3,v_4)=2$. By applying \eqref{eq-s(v)-2} to $s(v_3)$, $s(v_4)$ and $s(v_5)$ in turn, we can find that $H[u_1,v_1,v_2,v_3]\cong H[v_3,v_4,v_5,v_6]$. And so on, it is clear that $H[v_3,v_4,v_5,v_6]\cong H[v_6,v_7,v_8,v_9]\cong \cdots$. On the other hand, by $H[u_1,u_2,u_3]\cong H[v_2,v_3,v_4]$ and a similar discussion, we can deduce that $H[v_3,v_4,v_5,v_6]\cong H[u_2,u_3,u_4,u_5]\cong H[u_5,u_6,u_7,u_8]\cong\cdots$. Thus, $H$ is generated by repeatedly extending $H[u_1,v_1,v_2,v_3]$, and $n=|E(H[u_1,v_1,v_2,v_3])|t=3t$ $(t\geq1)$ is odd. Therefore, $H\cong U^3_{3t}$.

  \textbf{Case 2.} $w(v_2,v_3)=2$.

  Now $d(v_2)=4$. By \eqref{eq-type2}, we have $3a+b=8+d(u_2)$ and $4a+b=14$, which implies $a=6-d(u_2)$. Since $w(u_1,u_2)=1$, we have $d(u_2)\in\{2,3\}$, and thus $a\in\{3,4\}$.

  By \eqref{eq-s(v)-2}, we have $4a+b=s(v_2)=w(v_1,v_2)d(v_1)+w(v_2,v_3)d(v_3)$, which implies $d(v_3)=3$ by $4a+b=14$. Thus, $w(v_3,v_4)=1$.

  \textbf{Subcase 2.1.} $w(u_2,u_3)=1$.

  Now $d(u_2)=2$, $a=4$ by $a=6-d(u_2)$, and further $b=-2$ by $4a+b=14$. By \eqref{eq-s(v)-2}, we have $3a+b=s(v_3)=w(v_2,v_3)d(v_2)+w(v_3,v_4)d(v_4)$, which implies $d(v_4)=2$. Then $w(v_3,v_4)=w(v_4,v_5)=1$. Similarly, we can deduce that $d(v_5)=3$ by $2a+b=s(v_4)=w(v_3,v_4)d(v_3)+w(v_4,v_5)d(v_5)$. Then $w(v_5,v_6)=2$. By applying \eqref{eq-s(v)-2} to $s(v_5)$, $s(v_6)$, $s(v_7)$, $s(v_8)$ and $s(v_9)$ in turn, we can find that $H[u_1,v_1,v_2,v_3,v_4,v_5]\cong H[v_5,v_6,v_7,v_8,v_9,v_{10}]$. And so on, it is clear that $H[v_5,v_6,v_7,v_8,v_9,v_{10}]\cong H[v_{10},v_{11},v_{12},$ $v_{13},v_{14},v_{15}]\cong \cdots$. On the other hand, by $H[v_1,u_1,u_2,u_3]\cong H[v_2,v_3,v_4,v_5]$ and a similar discussion, we can deduce that $H[v_3,v_4,v_5,v_6,v_7,v_8]\cong H[u_1,u_2,u_3,u_4,u_5,u_6]\cong H[u_6,u_7,u_8,u_9,u_{10},u_{11}]\cong\cdots$. Thus, $H$ is generated by repeatedly extending $H[u_1,v_1,v_2,v_3,v_4,v_5]$, and $n=|E(H[u_1,$ $v_1,v_2,v_3,v_4,v_5])|t=5t$ $(t\geq1)$ is odd. Therefore, $H\cong U^4_{5t}$.

  \textbf{Subcase 2.2.} $w(u_2,u_3)=2$.

  Now $d(u_2)=3$, $a=3$ by $a=6-d(u_2)$, and further $b=2$ by $4a+b=14$. By \eqref{eq-s(v)-2}, we have $3a+b=s(v_3)=w(v_2,v_3)d(v_2)+w(v_3,v_4)d(v_4)$, which implies $d(v_4)=3$, and thus $w(v_4,v_5)=2$.

  By applying \eqref{eq-s(v)-2} to $s(v_4)$, $s(v_5)$, $s(v_6)$ and $s(v_7)$ in turn, we can find that $H[u_1,v_1,v_2,$ $v_3,v_4]\cong H[v_4,v_5,v_6,v_7,v_8]$. And so on, it is clear that $H[v_4,v_5,v_6,v_7,v_8]\cong H[v_8,v_9,v_{10},$ $v_{11},v_{12}]\cong \cdots$. On the other hand, by $H[v_1,u_1,u_2,u_3]\cong H[v_2,v_3,v_4,v_5]$ and a similar discussion, we can deduce that $H[v_3,v_4,v_5,v_6,v_7]\cong H[u_1,u_2,u_3,u_4,u_5]\cong H[u_5,u_6,u_7,u_8,u_9]$ $\cong \cdots$. Thus, $H$ is generated by repeatedly extending $H[u_1,v_1,v_2,v_3,v_4]$, and $n=|E(H[u_1,v_1,v_2,v_3,v_4])|t=4t$ $(t\geq1)$ is even. Therefore, $H\cong U^5_{4t}$.
\end{proof}

\subsubsection{$w(u_1,u_2)=2$ and $w(v_1,v_2)=1$}\label{subsection-Cn-3}
Similar to Subsection \ref{subsection-Cn-2} and we omit it.

\subsubsection{$w(u_1,u_2)=w(v_1,v_2)=2$}\label{subsection-Cn-4}

\begin{theorem}
  Let $H,a,b$ be as in Proposition $\ref{prop-a-b-fenlei-unicyclic}$, where $V(H)=V(C)$. If $w(u_1,u_2)=w(v_1,v_2)=2$, then $H\in\{U^4_{5t},U^5_{4t}\}$ $(t\geq1)$.
\end{theorem}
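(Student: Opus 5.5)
The plan mirrors the proofs for cases (i) and (ii): propagate the vertex condition \eqref{eq-s(v)-2} along the path $v_1v_2v_3\cdots$, fixing $a,b$ early and then reading off the multiplicities of all remaining edges. Since $w(u_1,v_1)=w(u_1,u_2)=w(v_1,v_2)=2$, we have $d(u_1)=d(v_1)=4$. By Lemma \ref{lemma-symmetric}, $H$ is symmetric with respect to the "axis" through the edge $u_1v_1$, so it suffices to determine the $v$-side. Applying \eqref{eq-s(v)-2} at $v_1$ gives
$$4a+b=s(v_1)=2d(u_1)+2d(v_2)=8+2d(v_2).$$
Since $w(v_1,v_2)=2$, we have $d(v_2)\in\{3,4\}$, and I split into two cases accordingly.

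\textbf{Case $d(v_2)=4$.} Then $w(v_2,v_3)=2$ and $4a+b=16$. At $v_2$ we get $16=s(v_2)=8+2d(v_3)$, so $d(v_3)=4$. Inductively every degree is $4$ and every edge is doubled. Hence $H\cong U^7_t$, which is regular and has exactly one main eigenvalue, a contradiction.

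\textbf{Case $d(v_2)=3$.} Then $w(v_2,v_3)=1$ and $4a+b=14$. At $v_2$ we get $3a+b=s(v_2)=8+d(v_3)$, so $a=6-d(v_3)$ with $d(v_3)\in\{2,3\}$.

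\emph{Subcase $d(v_3)=2$.} Here $a=4$ and $b=-2$. At $v_3$: $6=3+d(v_4)$, so $d(v_4)=3$ and $w(v_4,v_5)=2$. At $v_4$: $10=2+2d(v_5)$, so $d(v_5)=4$. At $v_5$: $14=6+2d(v_6)$, so $d(v_6)=4$. At $v_6$: $14=8+2d(v_7)$, so $d(v_7)=3$. Thus the multiplicities repeat with the pattern of three doubles followed by two singles. This matches the three consecutive doubles $u_2u_1$, $u_1v_1$, $v_1v_2$ already present. Hence $H[v_2,\dots,v_7]\cong H[v_7,\dots,v_{12}]\cong\cdots$, and the $u$-side agrees by Lemma \ref{lemma-symmetric}. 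So $H\cong U^4_{5t}$ with $n=5t$.

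\emph{Subcase $d(v_3)=3$.} Here $a=3$ and $b=2$, so $w(v_3,v_4)=2$. At $v_3$: $11=3+2d(v_4)$, so $d(v_4)=4$. At $v_4$: $14=6+2d(v_5)$, so $d(v_5)=4$. At $v_5$: $14=8+2d(v_6)$, so $d(v_6)=3$. This gives the pattern of three doubles and one single, hence $H\cong U^5_{4t}$.

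The main obstacle is the closing-up argument, i.e. making the periodicity rigorous. The edge sequence is forced in one direction once $a,b$ are fixed: by \eqref{eq-s(v)-2}, $d(v_{k+1})$ is determined by $d(v_{k-1})$, $d(v_k)$ and the two known multiplicities. So the sequence is periodic with the period found above. Combined with the symmetry of Lemma \ref{lemma-symmetric} and the fact that the cycle must close consistently at $x$ (odd $n$) or at the far edge (even $n$), this forces $n$ to be a multiple of the period.

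I would verify the closing by checking that the only consistent wrap-around is a whole number of blocks, using the degree constraints at the meeting point of the two sides. This is the same argument as in the proof of Theorem \ref{theorem-cycle-2type}. Finally, Proposition \ref{prop-U1-U7} confirms that both resulting graphs do have exactly two main eigenvalues.
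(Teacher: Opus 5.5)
Your proposal is correct and follows the paper's proof essentially step for step. Your split on $d(v_2)\in\{3,4\}$ is the paper's split on $w(v_2,v_3)$, the all-doubled case is excluded via the regular $U^7_t$ in the same way, and your subcases $d(v_3)=2$ (with $a=4$, $b=-2$) and $d(v_3)=3$ (with $a=3$, $b=2$) give $U^4_{5t}$ and $U^5_{4t}$ exactly as in the paper. Your degree propagation is actually more explicit than the paper's, which just refers back to Subcases 2.1 and 2.2 of Theorem \ref{theorem-cycle-2type}, and so is your closing-up remark that three consecutive multiplicities force the next one, hence $n$ must be a multiple of the period.
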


\begin{proof}
  It is clear that $d(u_1)=d(v_1)=4$. By \eqref{eq-s(v)-2}, we have
  \begin{equation}\label{eq-type3}
  \begin{cases}
    ad(v_1)+b=s(v_1)=w(u_1,v_1)d(u_1)+w(v_1,v_2)d(v_2),\\ ad(v_2)+b=s(v_2)=w(v_1,v_2)d(v_1)+w(v_2,v_3)d(v_3).
  \end{cases}
  \end{equation}
  Then we complete the proof by the following two cases.

  \textbf{Case 1.} $w(v_2,v_3)=1$.

  Now $d(v_2)=3$. By \eqref{eq-type3}, we have $4a+b=14$ and $3a+b=8+d(v_3)$, and further $a=6-d(v_3)$. By $w(v_2,v_3)=1$, we have $d(v_3)\in\{2,3\}$.

  \textbf{Subcase 1.1.} $d(v_3)=2$.

  Now $w(v_3,v_4)=w(v_2,v_3)=1$, $a=4$ by $a=6-d(v_3)$, and further $b=-2$ by $4a+b=14$. Through a discussion similar to Subcase 2.1 of Theorem \ref{theorem-cycle-2type}, we have $H\cong U^4_{5t}$ $(t\geq1)$.

  \textbf{Subcase 1.2.} $d(v_3)=3$.

  Now $w(v_3,v_4)=2$, $a=3$ by $a=6-d(v_3)$, and further $b=2$ by $4a+b=14$. Through a discussion similar to Subcase 2.2 of Theorem \ref{theorem-cycle-2type}, we have $H\cong U^5_{4t}$ $(t\geq1)$.

  \textbf{Case 2.} $w(v_2,v_3)=2$.

  Now $d(v_2)=4$. By \eqref{eq-type3}, we have $4a+b=16$ and $4a+b=8+2d(v_3)$, which implies $d(v_3)=4$. Thus, $w(v_3,v_4)=2$. Next we show that $d(v_k)=4$ by inductive hypothesis on $k$ $(3\leq k\leq\lfloor\frac{n}{2}\rfloor-1)$. Suppose that the result holds for $v_k$ $(k\leq \lfloor\frac{n}{2}\rfloor-1)$, then we have $d(v_k)=4$ and $w(v_{k-1},v_k)=w(v_k,v_{k+1})=2$. By \eqref{eq-s(v)-2}, we have $16=4a+b=s(v_k)=2d(v_{k-1})+2d(v_{k+1})$, which implies $d(v_{k+1})=4$. On the other hand, we can deduce that $d(u_i)=4$ for $1\leq i\leq\lfloor\frac{n}{2}\rfloor$ by Lemma \ref{lemma-symmetric} and $w(u_1,u_2)=w(v_1,v_2)$. It is easy to check that $w(u_{\lfloor\frac{n}{2}\rfloor},v_{\lfloor\frac{n}{2}\rfloor})=2$ if $n$ is even and $w(u_{\lfloor\frac{n}{2}\rfloor},x)=w(x,v_{\lfloor\frac{n}{2}\rfloor})=2$ if $n$ is odd. Thus, $H\cong U^7_t$ $(t\geq1)$. However, $U^7_t$ is $4$-regular and has exactly one main eigenvalue, a contradiction.
\end{proof}

\subsection{$|V(F)|\neq0$}\label{subsection-F>0}

Let $G_{uv}G'$ be the graph obtained from $G\cup G'$ by adding an edge joining the vertex $u$ of $G$ to the vertex $v$ of $G'$. If $G'$ is a tree, then we say $G'$ is a \emph{pendent tree} of $G_{uv}G'$.

In this subsection, we let $H^\circ\cong H'_{uv}T$, $v=v_l$, $u=v_{l+1}$ and $Q=v_0v_1\cdots v_{l-1}v_l$ $(l\geq0)$ be the longest $v$-path in $T$. It is obvious that $H'$ has exactly one induced subgraph that is a cycle, which we denote as $C$.

\begin{remark}\label{remark-main}
  In \cite{Du-4,Du-5}, the authors considered that the $B$-graph of $H$ being a tree, and consistently assumed that $P=v_0v_1\cdots v_{l'}$ represents the longest path of $H$ in their propositions, lemmas and theorems {\rm(}for example, Proposition \ref{prop-a-b-fenlei} here{\rm )}. It is known from Lemma \ref{lemma-EV} that this assumption implies the following two conclusions:

  {\rm (I)} any vertex $v\in N(v_i)\backslash\{v_{i+1}\}$ $(1\leq i\leq \lfloor\frac{l'}{2}\rfloor)$ is a $(v_i,t)$-vertex for some $t\in\{1,2,\ldots,i\}$;

  {\rm (II)} any vertex $v\in N(v_i)\backslash\{v_{i-1}\}$ $(\lfloor\frac{l'}{2}\rfloor+1\leq i\leq l'-1)$ is a $(v_i,t)$-vertex for some $t\in\{1,2,\ldots,l'-i\}$.

  However, due to the symmetry of the graph, only conclusion {\rm (I)} is actually required in the proof of the main results in \cite{Du-4,Du-5}.

  On the other hand, we defined $Q=v_0v_1\cdots v_{l-1}v_l$ $(l\geq0)$ as the longest $v$-path in $T$, where $H^\circ\cong H'_{uv}T$, $v=v_l$ and $u=v_{l+1}$. Then it is clear that any vertex $v'\in N(v_k)\backslash\{v_{k+1}\}$ $(1\leq k\leq l)$ is a $(v_k,t)$-vertex for some $t\in\{1,2,\ldots,k\}$.

  Therefore, $Q$ and the first half of $P$ share the same preconditions {\rm (}i.e., conclusion {\rm (I)}{\rm )}. Thus, we can conclude that all the results regarding $P$ in {\rm \cite{Du-4,Du-5}} are also applicable to $Q$ here.
\end{remark}

\subsubsection{$a=0$ and $b\neq0$}

\begin{lemma}\label{lemma-Hi}{\rm (\cite{Du-4})}
  Let $H, a, b$ be as in Proposition \ref{prop-a-b-fenlei}, $P=v_0v_1\ldots v_l$ be the longest path of $H$ with $l\geq3$. If $a=0$, then for all $k$ $(0\leq k\leq \lfloor\frac{l}{2}\rfloor)$, we have $
  d(v_k)=\left\{
    \begin{array}{ll}
      2, &\mbox{k is even},\\
      \frac{b}{2}, &\mbox{k is odd},
    \end{array}\right.
  $ $w(v_0,v_1)=2$, $w(v_k,v_{k+1})=1$ for any $k\geq1$, and $v$ is a $(v_k,t)$-vertex for any vertex $v\in N(v_k)\backslash\{v_{k+1}\}$ with $t\leq k$, where $k$ $(\geq1)$ and $t$ are odd. Moreover, for any vertex $v\in N(v_k)\backslash\{v_{k-1},v_{k+1}\}$ with $k$ is odd and any path $P'=u_0u_1\ldots u_{t-2}u_{t-1}u_t$ with $u_{t-1}=v$, $u_t=v_k$, we have $
  d(u_r)=\left\{
    \begin{array}{ll}
      2, &\mbox{r is even}\\
      \frac{b}{2}, &\mbox{r is odd}
    \end{array}\right.
  $ for $0\leq r\leq t-1$, $w(u_0,u_1)=2$, $w(u_{r},u_{r+1})=1$ for $1\leq r\leq t-1$, and for any vertex $u\in N(u_r)\backslash\{u_{r+1}\}$, $u$ is a $(u_r,s)$-vertex with $s\leq r$, where $r$ $(1\leq r\leq t-1)$ and $s$ are odd.
\end{lemma}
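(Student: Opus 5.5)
The plan is to specialize \eqref{eq-s(v)-2} to $a=0$, which says that every vertex satisfies
\begin{equation*}
s(v)=\sum_{u\sim v}w(u,v)d(u)=b .
\end{equation*}
I will then run an induction along $P$, and at the same time along every pendent branch hanging off $P$. All the facts needed about the branches come from Lemma \ref{lemma-EV}: every $v\in N(v_k)\backslash\{v_{k+1}\}$ is a $(v_k,t)$-vertex with $t\le k$.

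\textbf{Base step.} First I treat $v_0,v_1,v_2$ using Lemma \ref{prop-type} with $a=0$.
\begin{itemize}
\item Types (5) and (6) require $d(v_1)=a=0$, which is impossible.
\item Types (1) and (3) give $d(v_2)=a(\cdot)+1=1$. Then $v_2$ is pendent, contradicting $l\ge 3$.
\item Type (4) gives $d(v_2)=2$ together with $w(v_1,v_2)=2$. So $v_1$ is the only neighbour of $v_2$, and $v_2$ is again pendent, a contradiction.
\end{itemize}
Hence $v_1,v_2$ are of type (2). This gives $w(v_0,v_1)=2$, $w(v_1,v_2)=1$, $d(v_1)=b/2$ odd with $b/2\ge3$, and $d(v_2)=2$. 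Since $w(v_1,v_2)=1$ and $d(v_2)=2$, the vertex $v_2$ has exactly the two neighbours $v_1,v_3$, and $w(v_2,v_3)=1$. Then $s(v_2)=d(v_1)+d(v_3)=b$ forces $d(v_3)=b/2$.

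\textbf{Inductive step.} I use strong induction on $k$. The hypothesis is the full statement of the lemma for all smaller indices, applied both to $P$ and to every path $P'$ ending in a branch vertex; a branch at $v_k$ is a tree whose longest rooted path has fewer than $k$ edges, so the same induction covers it. The even and odd indices are handled as follows.
\begin{itemize}
\item \emph{Even index.} As for $v_2$, a vertex $v_{k}$ with $k$ even has degree $2$ with unit weights to $v_{k-1},v_{k+1}$. The relation $s(v_k)=d(v_{k-1})+d(v_{k+1})=b$ then propagates $d(v_{k+1})=b/2$.
\item \emph{Odd index.} For $k$ odd, every neighbour of $v_k$ other than $v_{k+1}$ is either $v_{k-1}$ or the endpoint $u_{t-1}$ of a branch path $P'$. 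By induction, such a neighbour has degree $2$ when $t-1$ is even and degree $b/2$ when $t-1$ is odd, and the edge to $v_k$ has weight $1$ in either case.
\end{itemize}

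\textbf{Main obstacle.} The key step is to exclude branches of even length $t$ at an odd vertex $v_k$. The plan is as follows. Suppose $t$ is even, so $u_{t-1}$ sits at an odd position. By induction its predecessor $u_{t-2}$ sits at an even position, so it has degree $2$ with unit edges to $u_{t-3}$ and $u_{t-1}$. Since $u_{t-1}$ is at an odd position of its own branch, it must behave like $v_1,v_3,\dots$ relative to $v_k$. Applying the even-index argument with the roles shifted then forces $d(v_k)=2$. This contradicts $d(v_k)=b/2\ge3$, where $b/2\ge 3$ is the oddness conclusion from type (2).

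Once even $t$ is excluded, all neighbours of $v_k$ except $v_{k+1}$ have degree $2$ and unit weight. Since $d(v_k)=b/2$, we get
\begin{equation*}
s(v_k)=2\Bigl(\frac b2-w(v_k,v_{k+1})\Bigr)+w(v_k,v_{k+1})\,d(v_{k+1})=b .
\end{equation*}
Combined with the fact that $v_{k+1}$ is not pendent (for $k<l/2$), this yields $w(v_k,v_{k+1})=1$ and $d(v_{k+1})=2$.

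The statements about the branch paths $P'$ are the same induction with $v_k$ playing the role of the far end. I expect the bookkeeping of this simultaneous induction to be the only delicate point.
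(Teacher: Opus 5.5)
The paper gives no proof of this lemma; it is quoted from \cite{Du-4}. So there is no in-paper argument to compare with, and judged on its own, your proof is correct. The skeleton is complete: force type (2) of Lemma~\ref{prop-type} when $a=0$, propagate $s(v)=b$ along $P$ and into the branches by strong induction on the index, and exclude even-depth branches at an odd-position $v_k$ because their odd-position root would force $d(v_k)=2$. You run the induction over every path satisfying condition (I) of Remark~\ref{remark-main}, not only over a longest path. That has a side benefit: it directly justifies the transfer to $Q$ that the paper uses in Lemma~\ref{lemma-a=0}.

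Several statements need repair, though none breaks the proof.

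\emph{Pendent neighbours carry double edges.} A pendent neighbour $x$ of an odd-position vertex (the case $t=1$) satisfies $s(x)=w(x,v_k)d(v_k)=b$ with $d(v_k)=b/2$, so $w(x,v_k)=d(x)=2$. Hence ``weight $1$ in either case'' and ``unit weight'' are false. Your displayed formula for $s(v_k)$ is still right, because it only needs $d(x)=2$ for every $x\neq v_{k+1}$ together with $\sum_{x\neq v_{k+1}}w(x,v_k)=d(v_k)-w(v_k,v_{k+1})$. Justify it that way.

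\emph{The even-$t$ exclusion.} The step you need is the \emph{odd}-index computation carried out at $u_{t-1}$, not the even-index one. The other neighbours of $u_{t-1}$ have degree $2$ by the hypothesis at index $t-1<k$, and the computation then gives $d(v_k)=d(u_t)=2$. Also, for $t=2$ the vertex $u_0$ is pendent with a double edge, not a degree-$2$ vertex with two unit edges. Handle that case directly by Lemma~\ref{prop-type}: with $a=0$ every admissible type gives $d(u_2)\le 2$, whereas $v_k$ has at least three distinct neighbours.

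\emph{Remaining details.} Non-pendency of $v_{k+1}$ requires $k+1<l$. This holds for every $k\le\lfloor l/2\rfloor$ because $l\ge3$, not only for $k<l/2$. Finally, condition (I) along a branch path $P'$ does not come from Lemma~\ref{lemma-EV}. It holds because $P'$ is a longest rooted path in its branch, and you should state this explicitly.
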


\begin{lemma}\label{lemma-a=0}
  Let $H, a, b$ be as in Proposition \ref{prop-a-b-fenlei-unicyclic}, $H^\circ\cong H'_{uv}T$, $v=v_l$, $u=v_{l+1}$, $Q=v_0v_1\cdots v_{l-1}v_l$ $(l\geq0)$ be the longest $v$-path in $T$. If $a=0$, then $l$ is even, $d(v_i)=\left\{
    \begin{array}{ll}
      2, &\mbox{$i$ is even},\\
      \frac{b}{2}, &\mbox{$i$ is odd},
    \end{array}\right.$ $w(v_0,v_1)=2$, $w(v_i,v_{i+1})=1$ for $1\leq i\leq l$, and $v'$ is a $(v_k,t)$-vertex for any vertex $v'\in N(v_k)\backslash\{v_{k+1}\}$ with $t\leq k$, where $k$ $(1\leq k\leq l+1)$ and $t$ are odd,  moreover, for any vertex $v'\in N(v_k)\backslash\{v_{k-1},v_{k+1}\}$ with $k$ is odd and any path $Q'=u_0u_1\ldots u_{t-2}u_{t-1}u_t$ with $u_{t-1}=v'$, $u_t=v_k$, we have $
  d(u_r)=\left\{
    \begin{array}{ll}
      2, &\mbox{r is even}\\
      \frac{b}{2}, &\mbox{r is odd}
    \end{array}\right.
  $ for $0\leq r\leq t-1$, $w(u_0,u_1)=2$, $w(u_{r},u_{r+1})=1$ for $1\leq r\leq t-1$, and for any vertex $u\in N(u_r)\backslash\{u_{r+1}\}$, $u$ is a $(u_r,s)$-vertex with $s\leq r$, where $r$ $(1\leq r\leq t-1)$ and $s$ are odd.
\end{lemma}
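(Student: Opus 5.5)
We argue by reducing to the tree case, via Remark \ref{remark-main}, and then tracking parities along $Q$. The conditions imposed by \eqref{eq-s(v)-2} at a vertex $v_k$ with $k\le l$ involve only $v_k$, its neighbours, and their degrees. All of these lie in $T$, except that $v_l$ is also adjacent to $v_{l+1}=u$. By Remark \ref{remark-main}, every vertex $v'\in N(v_k)\backslash\{v_{k+1}\}$ with $1\le k\le l$ is a $(v_k,t)$-vertex with $t\le k$. This is exactly hypothesis (I), which is all the proof of Lemma \ref{lemma-Hi} in \cite{Du-4} uses. Its computation runs upward from the leaf $v_0$ and uses \eqref{eq-s(v)-2} at $v_0,v_1,\ldots,v_k$ only. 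So I would repeat that induction verbatim along $Q$.

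\textbf{Step 1 (base).} $v_0$ is pendent with neighbour $v_1$, so \eqref{eq-s(v)-2} at $v_0$ with $a=0$ gives $b=w(v_0,v_1)d(v_1)$. The same holds for every pendent neighbour of $v_1$. Applying the relation at $v_1$ then forces $w(v_0,v_1)=2$, $d(v_0)=2$ and $d(v_1)=b/2$. I would check this against Lemma \ref{prop-type} with $a=0$: only case (2) survives there, giving $d(v_1)=b/2$ odd and $d(v_2)=2$.

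\textbf{Step 2 (induction along $Q$).} Suppose the degree pattern and multiplicities hold up to index $k\le l$. For even $k$ we have $d(v_k)=2$ and $w(v_{k-1},v_k)=1$. Hence $v_k$ has no other neighbours, so $w(v_k,v_{k+1})=1$. The equation $0\cdot d(v_k)+b=d(v_{k-1})+d(v_{k+1})$ then gives $d(v_{k+1})=b/2$. For odd $k$, the equation at $v_k$ reads
\[
b=\sum_{y\sim v_k} w(y,v_k)\,d(y).
\]
Using the structure of the side branches, each neighbour other than $v_{k+1}$ contributes $2$, and counting edges we get $b = 2(d(v_k)-1)+d(v_{k+1})$. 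With $d(v_k)=b/2$ this gives $d(v_{k+1})=2$. The claims about side branches (odd $t$, and the pattern along $Q'$) are proved inside the same induction, exactly as in Lemma \ref{lemma-Hi}. Each branch hanging at $v_k$ is a tree satisfying (I) relative to its own root path, so the identical argument applies to it.

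\textbf{Step 3 (parity of $l$).} This is the genuinely new point and, I expect, the main obstacle. The induction above determines $d(v_l)$ from data inside $T$ together with the single extra neighbour $u=v_{l+1}$. Suppose $l$ were odd. Then $d(v_l)=b/2$, and the induction forces $d(v_{l+1})=2$ with $w(v_l,v_{l+1})=1$. But $u=v_{l+1}$ lies on the cycle $C$ of $H'$, so it has two cycle neighbours besides $v_l$. Therefore $d(u)\ge 3$, a contradiction, and $l$ must be even. This is also the place to confirm that the relation at $v_l$ is legitimately used: it involves $u$ only through $w(v_l,u)d(u)$, which is exactly the role $v_{l+1}$ plays in the tree case. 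For even $l$, we have $d(v_l)=2$ with neighbours $v_{l-1}$ and $u$, which forces $w(v_l,u)=1$. The statement about $(v_{l+1},t)$-vertices for $k=l+1$ would come from applying Steps 1–2 to every other pendent tree attached at $u$, since each such tree has its own longest path of the same kind. For $l=0$ the base case directly gives $w(v_0,u)=2$, which is consistent with the convention in the statement.
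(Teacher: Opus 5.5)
You follow the paper's route. The tree-case induction of Lemma \ref{lemma-Hi} transfers to $Q$ by Remark \ref{remark-main}; the paper simply cites it, while you re-run it. The parity of $l$ then comes from $d(u)\ge 3$, since an odd $l$ forces $d(v_{l+1})=2$. Like the paper, you use that $u$ lies on $C$. This is not in the statement, and it is needed: if $u$ is off the cycle, one can have $d(u)=2$ and $l$ odd.

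The gap is the case $l=0$, where $v_1=u$ and the lemma claims $w(v,u)=2$. You say this follows \emph{directly} from the base case, but your Step~1 does not apply there.

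\begin{itemize}
\item Step~1 relies on every neighbour of $v_1$ other than $v_2$ being pendent. Then they share one multiplicity $w_0$, and the relation at $v_1$ collapses to $d(v_2)=w_0$.
\item When $v_1=u$ is a cycle vertex, its other neighbours include two cycle vertices and possibly roots of further pendent trees. That computation is then unavailable.
\item The relation $b=w(v,u)d(u)$ at $v$ alone does not fix $w(v,u)$.
\end{itemize}

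The paper treats this case separately, and you need its argument:
\begin{itemize}
\item Suppose $w(u,v)=1$. Then $d(u)=b$.
\item Applying \eqref{eq-s(v)-2} at $u$ together with $\sum_{y\sim u}w(y,u)=d(u)=b$ gives $\sum_{y\sim u}w(y,u)\bigl(d(y)-1\bigr)=0$.
\item This forces every neighbour of $u$ to have degree $1$, contradicting $d(u_1),d(u_2)\ge 2$ for the two cycle neighbours.
\end{itemize}
Hence $w(v,u)=2$ and $d(u)=b/2$.

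There is also a minor slip in Step~2. For odd $k$ you use $w(v_k,v_{k+1})=1$ before proving it. If you keep the multiplicity, the relation becomes $b=2\bigl(d(v_k)-w(v_k,v_{k+1})\bigr)+w(v_k,v_{k+1})d(v_{k+1})$. With $d(v_k)=b/2$ this gives $w(v_k,v_{k+1})\bigl(d(v_{k+1})-2\bigr)=0$, so $d(v_{k+1})=2$ in any case, which is all Step~3 needs.
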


\begin{proof}
  We complete the proof by the following two cases.

  \textbf{Case 1.} $l=0$.

  If $l=0$, then $V(T)=\{v\}$. Next we show $w(u,v)=2$.
  Suppose that $w(u,v)=1$, then $d(v)=1$. By \eqref{eq-s(v)-2}, we have $b=s(v)=w(u,v)d(u)$, which implies $d(u)=b$. By \eqref{eq-s(v)-2}, we have $b=s(u)=w(u,v)d(v)+\sum\limits_{u'\sim u, u'\neq v}w(u,u')d(u')$, where $w(u,v)+\sum\limits_{u'\sim u,u'\neq v}w(u,u')=d(u)$. Combining with $d(u)=b$, it is easy to check that $w(u,u')=d(u')=1$ for any $u'\in N(u)\backslash\{v\}$. However, there exists at least one vertex $u'\in V(C)$ with $d(u')\geq2$, a contradiction.

  If $w(u,v)=2$, then by \eqref{eq-s(v)-2}, we have $b=s(v)=w(u,v)d(u)$, which implies $d(u)=\frac{b}{2}$.

  \textbf{Case 2.} $l\geq1$.

  Let $V(H^\circ)=V(C)\cup V(F)$. Since $v\in N(u)$ and there exist two vertices $u_1,u_2\in N(u)\cap V(C)$, we have $d(u)\geq3$. On the other hand, we know that $d(u)=d(v_{l+1})\in\{2,\frac{b}{2}\}$ by Remark \ref{remark-main} and Lemma \ref{lemma-Hi}. Thus, $l$ is even by $\frac{b}{2}\geq3$. The rest part of the result is obvious.
\end{proof}
\vskip0.2cm

Now we define $\mathcal{H}^1(b,t)$ ($b=4k+2\geq 6$ and $t\geq4$ are even) be a set of (0,1,2)-multigraphs such that for any $H\in \mathcal{H}(b)$, $H$ satisfies the following three conditions:

(1) $H^\circ$ is a unicyclic graph;

(2) If we let $V(H^\circ)=V(C)\cup V(F)$, then $w(u,v)=1$ for any pair of adjacency vertices $u,v\in V(C)$. Moreover, the degrees of vertices of $C$ are either $2$ or $\frac{b}{2}$ $(\geq3)$, and they appear alternately;

(3) Let $u,u_1,u_2\in V(C)$, $d(u)=\frac{b}{2}$, $N(u)=\{u_1,u_2,p_1,p_2,\ldots,p_t\}$ $(t\geq 1)$, and $H^\circ\cong {H^i}_{up_i}T_i$ for $1\leq i\leq t$. If $Q=v_0v_1\cdots v_{l-1}v_l$ $(v_l=p_i)$ is the longest $p_i$-path in $T_i$, then $l$ is even, $w(v_0,v_1)=2$, $w(v_l,u)=w(v_i,v_{i+1})=1$ for $1\leq i\leq l-1$, $d(v_i)=2$ if $i$ is even and $d(v_i)=\frac{b}{2}$ otherwise. Moreover, if there exists another $v_i$-path $u_0u_1\cdots u_{m-2}u_{m-1}v_i$ $(u_m=v_i)$ with $u_0$ as its pendent vertex, then $m$ $(\leq i)$ is odd, $d(u_j)=2$ if $j$ is even and $d(u_j)=\frac{b}{2}$ if $j$ is odd, $w(u_0,u_1)=2$, $w(u_{m-1},v_i)=w(u_j,u_{j+1})=1$ for $1\leq j\leq m-2$, and for any vertex $u\in N(u_r)\backslash\{u_{r+1}\}$, $u$ is a $(u_r,s)$-vertex, where $r$ $(1\leq r\leq m-1)$ and $s$ $(\leq r)$ are odd.

For example, $H_1,H_2,H_3,H_4\in \mathcal{H}^1(6,4)$ and $H_5\in\mathcal{H}^1(10,6)$ (see Figure \ref{H1-H5}).

\begin{figure}[!h]
	\centering
	\begin{tikzpicture}
		\node[anchor=south west,inner sep=0] (image) at (0,0) {\includegraphics[width=0.7\textwidth]{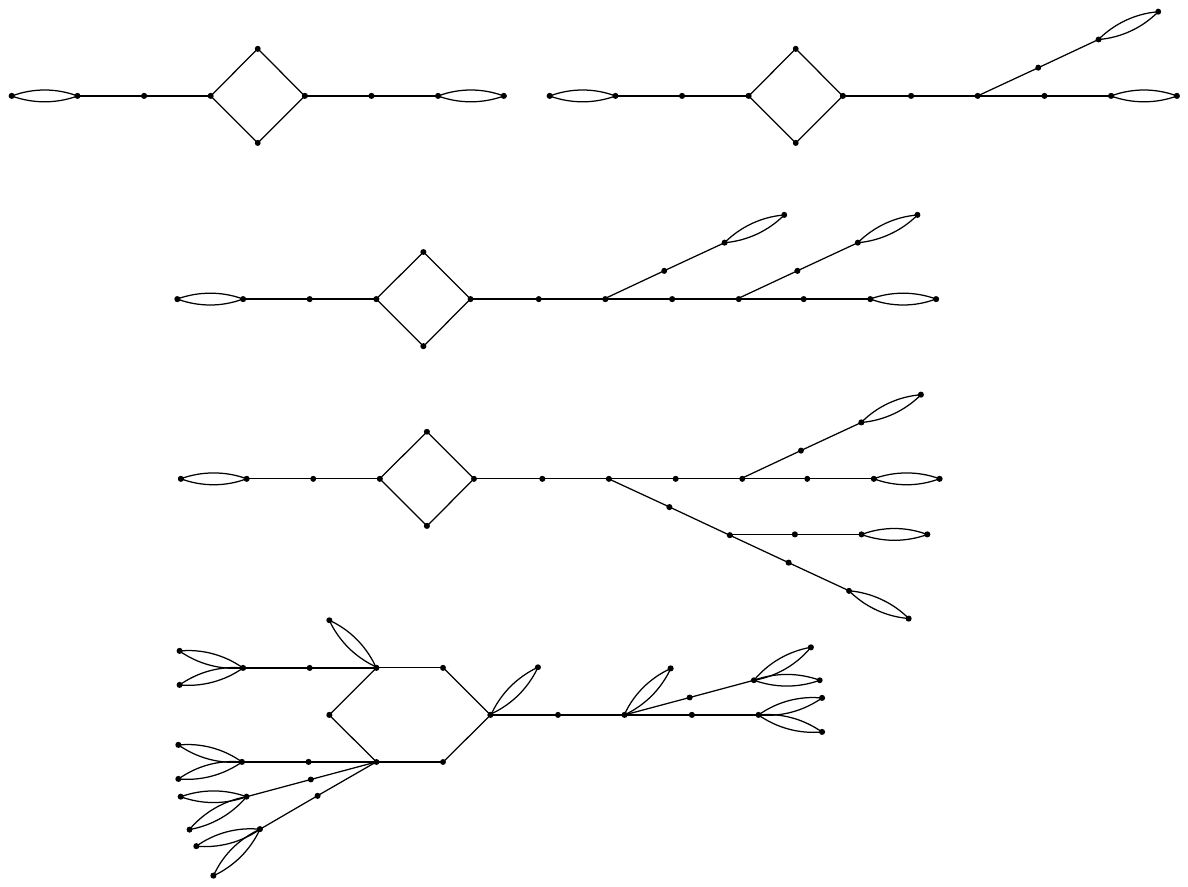}};
		\begin{scope}[
			x={(image.south east)},
			y={(image.north west)}
			]
			\node [black, font=\bfseries] at (0.23,0.8) {$H_1$};
            \node [black, font=\bfseries] at (0.69,0.8) {$H_2$};
            \node [black, font=\bfseries] at (0.37,0.57) {$H_3$};
			\node [black, font=\bfseries] at (0.37,0.37) {$H_4$};
            \node [black, font=\bfseries] at (0.35,0.07) {$H_5$};
		\end{scope}
	\end{tikzpicture}
	\caption{The graphs $H_1,H_2,H_3,H_4$ and $H_5$.}\label{H1-H5}
\end{figure}

It is easy to check that $0\cdot d(v)+b=s(v)$ for any vertex $v\in H$, where $H\in\mathcal{H}^1(b,t)$. Then we have the following result by Theorem \ref{lemma-s(v)}.
\begin{proposition}
  Let $H\in\mathcal{H}^1(b,t)$. Then $H$ has exactly two main eigenvalues.
\end{proposition}

\begin{theorem}\label{theorem-a=0}
  Let $H,a,b$ be as in proposition \ref{prop-a-b-fenlei-unicyclic}, $H^\circ\cong H'_{uv}T$, $v=v_l$, $u=v_{l+1}$, $Q=v_0v_1\cdots v_{l-1}v_l$ $(l\geq0)$ be the longest $v$-path in $T$. If $a=0$, then $H\in \mathcal{H}^1(b,t)$.
\end{theorem}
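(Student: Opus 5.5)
We verify the three defining conditions of $\mathcal{H}^1(b,t)$ for $H$. Condition (1) holds by hypothesis, so the work is in (2) and (3). The idea is to apply Lemma \ref{lemma-a=0} to every pendent tree hanging off the cycle $C$, which by Remark \ref{remark-main} is legitimate for each attachment vertex. Then we pin down the degrees and multiplicities on $C$ from \eqref{eq-s(v)-2} with $a=0$, i.e. $s(x)=b$ for every vertex $x$.

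\textbf{Step 1 (pendent trees).} Let $u\in V(C)$ and let $p\in N(u)\setminus V(C)$, with pendent tree $T_p$. Taking $v=p$ and the longest $p$-path in $T_p$, Lemma \ref{lemma-a=0} (Case 2 when $l\ge1$, Case 1 when $l=0$) gives the following. The length $l$ is even, $w(v_0,v_1)=2$, and all other edges on the path, including $pu$, are single. The degrees alternate $2,\frac b2$ along the path. The side-branch structure is exactly the one required in (3). In particular $d(u)=d(v_{l+1})$ has the parity-determined value $\frac b2$ (Case 1 gives $d(u)=\frac b2$ directly), and $\frac b2\ge 3$ because $u$ has two cycle neighbours plus $p$. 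So every cycle vertex carrying a pendent tree has degree $\frac b2\ge3$, all of its tree edges are single, and its neighbours in the tree have degree $2$.

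\textbf{Step 2 (the cycle).} Let $u\in V(C)$ with $d(u)=\frac b2$ have pendent neighbours $p_1,\dots,p_t$, each with $w(u,p_i)=1$ and $d(p_i)=2$. Let $u_1,u_2$ be its cycle neighbours. Then
\[
b=s(u)=2t+w(u,u_1)d(u_1)+w(u,u_2)d(u_2),\qquad \tfrac b2=t+w(u,u_1)+w(u,u_2).
\]
Hence $w(u,u_1)(d(u_1)-2)+w(u,u_2)(d(u_2)-2)=0$. Since $d\ge2$ on the cycle, $d(u_1)=d(u_2)=2$. A cycle vertex of degree $2$ has single edges to both cycle neighbours and no pendent tree. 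So both cycle edges at $u$ are single, and $u_1,u_2$ have degree $2$. Next take a degree-$2$ cycle vertex $x$ with cycle neighbours $y,z$: we get $b=d(y)+d(z)$. Starting from $u_1$, whose neighbour $u$ has degree $\frac b2$, the other neighbour also has degree $\frac b2$.

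\textbf{Step 3 (propagation and the obstacle).} We propagate around $C$. A vertex $y$ of degree $\frac b2$ on $C$ whose cycle edges are both single must carry pendent trees, since $\frac b2\ge3$, so Step 2 applies to it. By induction the degrees alternate $2,\frac b2$ with all cycle edges single, which gives condition (2) and forces $|C|$ even.

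The main obstacle is the starting point. We must show that some cycle vertex carries a pendent tree, i.e. $|V(F)|\neq0$, which is the setting of this subsection. Taking that vertex as $u$ starts the induction. We also need to exclude cycle vertices with double edges and no pendent tree that might appear later in the propagation. This is handled because each propagation step determines the multiplicities from the degree equation: a degree-$\frac b2$ vertex $y$ reached this way has a single edge to its degree-$2$ predecessor and pendent neighbours of degree $2$. The same computation as in Step 2 then forces its other cycle edge to be single with an endpoint of degree $2$. Finally we check parity and the parameters: $b=4k+2$, equivalently $\frac b2$ odd, should follow from the alternating degree pattern. The count $t$ of pendent neighbours of a degree-$\frac b2$ cycle vertex equals $\frac b2-2$, which fixes the second index of $\mathcal{H}^1(b,t)$. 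These are routine once the structure is established, and together they give $H\in\mathcal{H}^1(b,t)$.
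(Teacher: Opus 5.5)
Your route is the paper's: Lemma \ref{lemma-a=0} on each pendent tree, then the count at a cycle vertex forcing $d(u_1)=d(u_2)=2$, then propagation around $C$. Your propagation step is actually more careful than the paper's unexplained $t'\geq1$. However, two steps are wrong.

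\textbf{Step 1 misreads Lemma \ref{lemma-a=0} when $l=0$.} The pendent tree is then a single leaf $p=v_0$, and $pu=v_0v_1$ is precisely the edge of multiplicity $2$; Case 1 of that lemma's proof rules out $w(u,p)=1$. So ``including $pu$, are single'' is false. Step 2 has to be set up as in the paper, with $c_1,c_2$ the numbers of pendent neighbours of $u$ joined by single resp.\ double edges:
$b=2c_1+4c_2+w(u,u_1)d(u_1)+w(u,u_2)d(u_2)$ and $\frac b2=c_1+2c_2+w(u,u_1)+w(u,u_2)$.
Every pendent neighbour contributes to $s(u)$ exactly twice what it contributes to $d(u)$, so your cancellation and Step 3 survive. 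But the number of pendent neighbours is then $c_1+c_2$ with $c_1+2c_2=\frac b2-2$, not $\frac b2-2$. Nor is it the second index of $\mathcal{H}^1(b,t)$, as the examples $\mathcal{H}^1(6,4)$ and $\mathcal{H}^1(10,6)$ show.

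\textbf{The parity $b=4k+2$ does not follow from the alternating pattern on $C$.} At each degree-$\frac b2$ cycle vertex that pattern only yields $\frac b2=2+c_1+2c_2$. Oddness of $\frac b2$ comes instead from a pendent tree with $l\geq2$. Its $v_1$ has $w(v_1,v_2)=1$, and all other neighbours of $v_1$ are leaves; each can play the role of $v_0$, so each is joined by a double edge. Hence $d(v_1)=\frac b2$ is odd. If every pendent tree is a single leaf, then $c_1=0$ and $\frac b2$ is even, and such $H$ exist. Take the $4$-cycle $y_1z_1y_2z_2$ with single edges and a leaf joined by a double edge to each of $y_1,y_2$. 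Then $s(v)=8$ for all $v$ while degrees are non-constant, so $a=0$ and $b=8$.

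So the parity cannot be proved, and the statement as written fails in this subcase. The paper's proof never checks the parity of $b$ either. Condition (3), read literally with $w(v_l,u)=1$, also excludes these leaves. What your corrected argument does establish is the structure in (1)--(3), with $l=0$ leaves attached by double edges allowed, together with this: $\frac b2$ is odd if and only if some pendent tree has $l\geq2$.
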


\begin{proof}
  According to the structure of $H^\circ$, we can assume that $V(H^\circ)=V(C)\cup V(T_1)\cup V(T_2)\cup\cdots \cup V(T_s)$ $(s\geq1)$. Without loss of generality, we can suppose $N(u)=\{u_1,u_2,p_1,p_2,\ldots,p_t\}$ $(t\leq s)$ and $H^\circ\cong {H^i}_{up_i}T_i$ for $1\leq i\leq t$, where $u,u_1,u_2\in V(C)$ and $p_i\in V(T_i)$.

  Let $i\in\{1,2,\ldots,t\}$. If $Q=v_0v_1\cdots v_{l-1}v_l$ $(l\geq0)$ is the longest $p_i$-path in $T_i$, where $p_i=v_l$, $u=v_{l+1}$, then by Lemma \ref{lemma-a=0}, we know that $l$ is even, $d(u)=\frac{b}{2}$, $d(p_i)=2$, and
  $
  w(p_i,u)=\left\{
    \begin{array}{ll}
      2, &\mbox{if $l=0$,}\\
      1, &\mbox{if $l\geq2$.}
    \end{array}\right.
  $
  Let $c_1,c_2$ be the number of vertices in $N(u)\backslash\{u_1,u_2\}$ with $w(u,p_i)=1$, $w(u,p_i)=2$, respectively. Then by \eqref{eq-s(v)-2}, we have $b=s(u)=w(u,u_1)d(u_1)+w(u,u_2)d(u_2)+2c_1+4c_2$ and $\frac{b}{2}=d(u)=w(u,u_1)+w(u,u_2)+c_1+2c_2$, which implies $d(u_1)=d(u_2)=2$. Thus, $w(u,u_1)=w(u,u_2)=1$ by $d(u_1)\geq2$ and $d(u_2)\geq2$.

  Let $N(u_2)=\{u,u_3\}$. Then $w(u,u_2)=w(u_2,u_3)=1$ by $d(u_2)=2$ and $w(u,u_2)=1$. By \eqref{eq-s(v)-2}, we have $b=s(u_2)=w(u,u_2)d(u)+w(u_2,u_3)d(u_3)$, which implies $d(u_3)=\frac{b}{2}$. Let $N(u_3)=\{u_2,u_4,q_1,q_2,\ldots,q_{t'}\}$ $(t'\geq1)$, where $u_4\in V(C)$. Then by a discussion similar to $u$, we have $d(u_4)=2$ and $w(u_3,u_4)=w(u_4,u_5)=1$. And so on, it is easy to verify that the degrees of vertices of $C$ are either $2$ or $\frac{b}{2}$, and they appear alternately. Not only that, but the vertices with a degree of $\frac{b}{2}$ in $C$ share the same structural characteristics as vertex $u$.

  On the other hand, the structure of the pendent trees of $H$ can be derived from Lemma \ref{lemma-a=0}. Thus, $H\in \mathcal{H}^1(b)$.
\end{proof}

\subsubsection{$a=1$ and $b\neq0$}

\begin{lemma}\label{lemma-a=1-1}
  Let $H,a,b$ be as in Proposition \ref{prop-a-b-fenlei-unicyclic}, $H^\circ\cong H'_{uv}T$. If $a=1$ and $b\neq0$, then $l\leq1$, moreover, if $l=1$, then $v$ and $u$ satisfies condition $(2)$ of Lemma $\ref{prop-type}$.
\end{lemma}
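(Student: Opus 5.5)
The plan is to rewrite \eqref{eq-s(v)-2} for $a=1$ in a ``defect'' form and then run a leaf-to-root degree count along $Q$. Since $d(v)=\sum_{y\sim v}w(y,v)$, the condition $d(v)+b=s(v)$ is equivalent to
\begin{equation*}
\sum_{y\sim v} w(y,v)\bigl(d(y)-1\bigr)=b \quad\mbox{for every } v\in V(H).
\end{equation*}
In this form every neighbour contributes a nonnegative amount. A pendent neighbour joined by a single edge contributes $0$, and one joined by a double edge contributes $2$.

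\textbf{Step 1 (leaves).} Apply the identity at a pendent vertex $x$ with $w(x,y)=w$. This gives $w(d(y)-1)=b$, so $b>0$ and $d(y)=1+b/w$. In particular this holds at $v_0$ with $y=v_1$.

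\textbf{Step 2 (apply Lemma~\ref{prop-type}).} By Remark~\ref{remark-main} we may use Lemma~\ref{prop-type} for the pair $v_1,v_2$, where $v_2=u$ if $l=1$. Types (5) and (6) need $d(v_1)=a=1$, which is impossible. For the remaining types, plug $a=1$ into the identity at $v_1$. In types (1) and (3) all pendent neighbours of $v_1$ have weight $1$, so $d(v_1)-1=b$ and $w(v_1,v_2)(d(v_2)-1)=b$. In types (2) and (4) we have $d(v_1)-1=b/2$, and the $(d(v_1)-w(v_1,v_2))/2$ double pendents contribute $d(v_1)-w(v_1,v_2)$.

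\textbf{Step 3 (the case $l=1$).} Now $v_2=u$ lies on $C$, so besides $v_1$ it has two cycle neighbours $u_1,u_2$ with $d(u_i)\ge 2$. Each contributes at least $w(u,u_i)\ge1$ to the identity at $u$.
\begin{itemize}
\item In types (1) and (3), $v_1$ alone contributes $w(v_1,u)\,b\ge b$ to the identity at $u$. Together with the positive contributions of $u_1,u_2$ this exceeds $b$, a contradiction.
\item In type (4), $w(v_1,u)=2$ and $d(v_1)-1=b/2$, so $v_1$ contributes exactly $b$. The cycle neighbours again make the sum exceed $b$, a contradiction.
\item Only type (2) survives, which is the second claim of the lemma.
\end{itemize}

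\textbf{Step 4 (the case $l\ge2$).} We must reach a contradiction, and the same count at $v_2$ is the tool. Types (1), (3), (4) for $(v_1,v_2)$ are excluded exactly as in Step 3, because $v_3$ contributes at least $1$. So $(v_1,v_2)$ is of type (2). The identity at $v_1$ then forces $d(v_2)-1=b/2$, and $v_1$ contributes $b/2$ at $v_2$. Hence the remaining neighbours of $v_2$, of total weight $b/2$, must contribute exactly $b/2$.

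Now examine these remaining neighbours of $v_2$:
\begin{itemize}
\item Any other $(v_2,2)$-child is of type (2) as well (Lemma~\ref{lem-different-1-4}). It already contributes $b/2$, leaving nothing for $v_3$, a contradiction.
\item A pendent child of weight $1$ would force $d(v_2)=1+b$ by Step 1, contradicting $d(v_2)=1+b/2$.
\end{itemize}
So the other neighbours of $v_2$ are double pendents (each contributing exactly its weight) and $v_3$. Moreover $w(v_2,v_3)(d(v_3)-1)$ must equal $w(v_2,v_3)$, i.e. $d(v_3)=2$. Thus $v_3$ has one further neighbour, $v_4$ (which is $u$ if $l=2$).

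The remaining task is to propagate this rigid structure up $Q$ towards $u$. At the degree-$2$ vertex $v_3$ the identity gives $w(v_2,v_3)\,b/2+w(v_3,v_4)(d(v_4)-1)=b$. This pins down $d(v_4)$, and we then re-examine $v_4$ as we did $v_2$. I expect the process to be periodic, alternating degrees $2$ and $1+b/2$ with forced weights. Finally I will compare the resulting degree of $u$ with the identity at $u$, where the two cycle neighbours add extra positive contributions, and obtain a contradiction.

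The main obstacle is this propagation through general branching vertices $v_k$ with $k\ge3$. There the $(v_k,t)$-children can have large $t$, so I will need an induction showing that every such child contributes at least $b/2$ per unit weight. If this turns out too weak, I will instead bound the defect sum at $u$ using the parity or periodic structure forced along $Q$.
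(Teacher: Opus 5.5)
Your identity $\sum_{y\sim x}w(x,y)\bigl(d(y)-1\bigr)=b$ is correct. Steps 1--3 are sound, and so is the first part of Step 4: excluding types (1), (3), (4) for $(v_1,v_2)$, excluding further $(v_2,2)$-children and weight-one pendents at $v_2$, and deducing $d(v_3)=2$. This is a more self-contained version of what the paper imports from the proof of Theorem 4.12 in \cite{Du-4}.

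The gap comes right after that. You assert that $v_3$ has a further neighbour $v_4$, and then leave the case $l\ge2$ to a propagation along $Q$ that you do not carry out and yourself call the main obstacle. So the claim $l\le1$ is not proved. There is also an indexing slip. Since $u=v_{l+1}$, for $l=2$ the vertex $v_3$ is $u$ itself, not a neighbour of $u$. Then $d(v_3)=2<3\le d(u)$ already settles that case.

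The missing idea is a parity count at $v_2$, which makes any propagation unnecessary. In type (2), $v_1$ has one single edge to $v_2$ and $m_1\ge1$ double pendents. Hence $b/2=d(v_1)-1=2m_1$ is even, and $d(v_2)=1+b/2$ is odd. You have already shown that every neighbour of $v_2$ other than $v_1,v_3$ is a double pendent. So $d(v_2)=1+w(v_2,v_3)+2m$, which forces $w(v_2,v_3)=2$. Together with $d(v_3)=2$ this gives $N(v_3)=\{v_2\}$, so $v_3$ is a pendent vertex. That is impossible for $l\ge2$, because $v_3$ is either $u$ or an inner vertex of $Q$ adjacent to $v_4$.

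This is exactly the fact $w(v_2,v_3)=d(v_3)=2$ that the paper's Subcase 3.1 extracts from \cite{Du-4}. In particular, the alternating pattern you expect along $Q$ (degrees $2$ and $1+b/2$ joined by single edges) cannot occur.
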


\begin{proof}
  We complete the proof by the following four cases.

  \textbf{Case 1.} $v_1,v_2$ is of type (1).

  Suppose to the contrary that $l\geq2$. By the proof of the Case 1 of Theorem 4.12 in \cite{Du-4}, we know that $d(v_1)=d(v_2)=1+b$ and any vertex $v'\in N(v_2)\backslash\{v_1\}$ is a pendent vertex with $w(v',v_2)=1$. However, by the structure of $H$, it is clear that there exist at least one vertex $v'\in N(v_2)$ with $d(v')\geq2$, it is a contradiction.

  Furthermore, if $l=1$, then $d(u)=d(v_1)=1+b$ and any vertex $v'\in N(u)\backslash\{v_1\}$ is a pendent vertex with $w(v',v_2)=1$. However, this contradicts the fact that there exists a $v'$ with $d(v')\geq2$ that belongs to the cycle.

  \textbf{Case 2.} $v_1,v_2$ is of type (3).

  Suppose to the contrary that $l\geq2$. By the proof of the Case 2 of Theorem 4.12 in \cite{Du-4}, we know that $d(v_1)=1+b$, $d(v_2)=1+\frac{b}{2}$ and $1+\frac{3}{2}b=s(v_2)<2d(v_1)=2+2b$, a contradiction. If $l=1$, then we have $d(u)=1+\frac{b}{2}$ and $s(u)<2d(v_1)=2+2b$ by a similar reason, which leads to a contradiction.

  \textbf{Case 3.} $v_1,v_2$ is of type (2) or (4).

  \textbf{Subcase 3.1.} $v_1,v_2$ is of type (2) and $l\geq2$.

  By the proof of the Case 3 of Theorem 4.12 in \cite{Du-4}, we know that $l\leq3$ and $w(v_2,v_3)=d(v_3)=2$. Now $v_3=u$, which contradicts $d(u)\geq3$.

  \textbf{Subcase 3.2.} $v_1,v_2$ is of type (4) and $l\geq2$.

  By the proof of the Case 4 of Theorem 4.12 in \cite{Du-4}, we know that $d(v_3)=1$. Obviously, $H$ cannot exist.

  \textbf{Subcase 3.3.} $v_1,v_2$ is of type (4) and $l=1$.

  Now $v_1=v$, $v_2=u$. By the discussion of Case 1, Case 2, Subcase 3.1 and Subcase 3.2 above, we know that there is no path $v'_0v'_1\cdots v'_mv'_{m+1}$ with $m\geq2$, where $v'_{m+1}=u$ and $v'_0$ is a pendent vertex. Moreover, if $m=1$, then $v'_1$ and $u$ is of type (4) by Lemma \ref{lem-different-1-4}.

  Let $V(C)\cap N(u)=\{u_1,u_2\}$, where $u=v_2$. By the proof of the Case 4 of Theorem 4.12 in \cite{Du-4}, we know that $d(v_1)=1+\frac{b}{2}$, $d(v_2)=\frac{3}{2}+\frac{b}{4}$ and any vertex $v'\in N(v_2)\backslash\{u_1,u_2\}$ cannot be a pendent vertex. Thus, combining the above discussion, we know that any vertex $v'\in N(v_2)\backslash\{u_1,u_2\}$ is a $(v_2,2)$-vertex with $d(v')=d(v_1)$ and $w(v',v_2)=2$. Then by \eqref{eq-s(v)-2}, we have $\frac{3}{2}+\frac{5b}{4}=d(u)+b=s(u)=w(u_1,u)d(u_1)+w(u_2,u)d(u_2)
  +2(1+\frac{b}{2})(\frac{d(u)-w(u_1,u)-w(u_2,u)}{2})$. By $\frac{3}{2}+\frac{5b}{4}=s(u)>2(1+\frac{b}{2})(\frac{d(u)-w(u_1,u)-w(u_2,u)}{2})$, it is clear that $\frac{d(u)-w(u_1,u)-w(u_2,u)}{2}=1$. Thus, we have $\frac{3}{2}+\frac{b}{4}=d(u)=w(u_1,u)+w(u_2,u)+2$ and $\frac{3}{2}+\frac{5b}{4}=s(u)=w(u_1,u)d(u_1)+w(u_2,u)d(u_2)
  +2(1+\frac{b}{2})$, which implies $w(u_1,u)(d(u_1)-1)+w(u_2,u)(d(u_2)-1)=0$. However, this implies $d(u_1)=d(u_2)=1$, which contradicts $d(u_1)\geq2$ and $d(u_2)\geq2$.

  \textbf{Subcase 3.4.} $v_1,v_2$ is of type (2) and $l=1$.

  Now $v_1=v$, $v_2=u$. By the discussion of Cases 1-2 and Subcases 3.1-3.3 above, we know that there is no path $v'_0v'_1\cdots v'_mv'_{m+1}$ with $m\geq2$, where $v'_{m+1}=u$ and $v'_0$ is a pendent vertex. Moreover, if $m=1$, then $v'_1$ and $v'_2$ is of type (2) by Lemma \ref{lem-different-1-4}.

  By the proof of the Case 3 of Theorem 4.12 in \cite{Du-4}, we know that $d(v_1)=d(v_2)=1+\frac{b}{2}$ and the vertex $v'\in N(v_2)\backslash\{v_1,u_1,u_2\}$ can be a pendent vertex with $w(v',v_2)=d(v')=2$, where $v_2=u$ and $u_1,u_2\in N(u)\cap V(C)$.
\end{proof}
\vskip0.2cm

Let $D_1$ and $D_2$ be $(0,1,2)$-multigraphs shown as in Figure \ref{H2-H3}. For $D_1$, we have $d(v_1)=d(v_3)=1$, $b=4k$ $(k\geq1)$, $d(v_2)=d(v_4)=1+\frac{b}{2}$ is odd, $w(v_1,v_2)=w(v_2,v_3)=w(v_2,v_4)=1$, any vertex $v\in N(v_4)\backslash\{v_2\}$ is a pendent vertex with $d(v)=w(v,v_4)=2$, and any vertex $v'\in N(v_2)\backslash\{v_1,v_3,v_4\}$ is a pendent vertex with $d(v)=w(v,v_4)=2$. For $D_2$, we have $d(v_1)=d(v_4)=1$, $b=4k+2$ $(k\geq1)$, $d(v_2)=d(v_3)=1+\frac{b}{2}$ is even, $w(v_1,v_2)=w(v_2,v_3)=w(v_3,v_4)=1$, any vertex $v\in N(v_2)\backslash\{v_1,v_3\}$ is a pendent vertex with $d(v)=w(v,v_2)=2$, and any vertex $v\in N(v_3)\backslash\{v_2,v_4\}$ is a pendent vertex with $d(v)=w(v,v_3)=2$.

Now, we define two types of $(0,1,2)$-multigraphs, denoted as $\mathcal{H}^2(b,t)$ and $\mathcal{H}^3(b,t)$ ($t\geq3$). Among them, $\mathcal{H}^2(b,t)$ (similarly for $\mathcal{H}^3(b,t)$) is constructed by cyclically connecting $t$ number of $D_1$ ($D_2$) horizontally. Specifically, the right side of one $D_1$ ($D_2$) is connected to the left side of another $D_1$ ($D_2$), repeating until all $t$ are connected, preserving each $D_1$ ($D_2$) internal connections.

\begin{figure}[!h]
	\centering
	\begin{tikzpicture}
		\node[anchor=south west,inner sep=0] (image) at (0,0) {\includegraphics[width=0.8\textwidth]{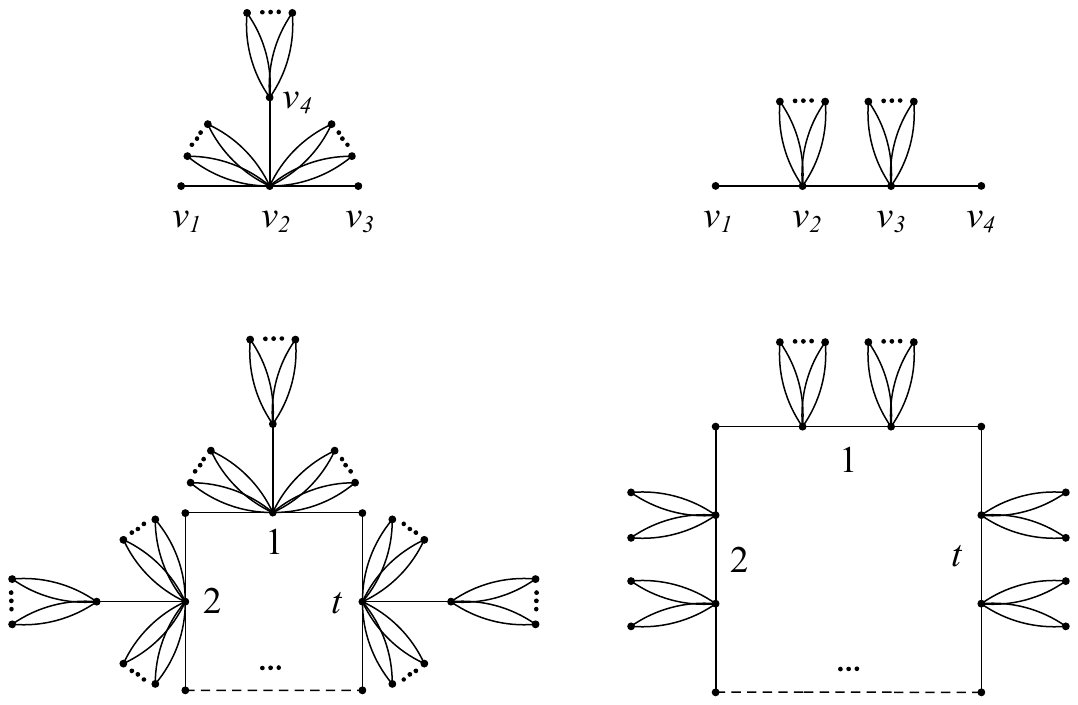}};
		\begin{scope}[
			x={(image.south east)},
			y={(image.north west)}
			]
			\node [black, font=\bfseries] at (0.25,0.65) {$D_1$};
            \node [black, font=\bfseries] at (0.75,0.65) {$D_2$};
            \node [black, font=\bfseries] at (0.25,0.05) {$\mathcal{H}^2(b,t)$};
			\node [black, font=\bfseries] at (0.75,0.05) {$\mathcal{H}^3(b,t)$};
		\end{scope}
	\end{tikzpicture}
	\caption{The graphs $D_1$, $D_2$, $\mathcal{H}^2(b,t)$ and $\mathcal{H}^3(b,t)$, where $b\geq2$ and $t\geq3$.}\label{H2-H3}
\end{figure}

It is easy to check that $1\cdot d(v)+b=s(v)$ for any vertex $v\in H$, where $H\in\{\mathcal{H}^2(b,t)\}$ (or $H\in\mathcal{H}^3(b,t)\}$). Then we have the following result by Theorem \ref{lemma-s(v)}.
\begin{proposition}
  Let $H\in\{\mathcal{H}^2(b,t),\mathcal{H}^3(b,t)\}$. Then $H$ has exactly two main eigenvalues.
\end{proposition}

\begin{theorem}\label{theorem-a=1-1}
  Let $H,a,b$ be as in Proposition \ref{prop-a-b-fenlei-unicyclic}. If $a=1$, then $H\in\{\mathcal{H}^2(b,t),\mathcal{H}^3(b,t)\}$ $(t\geq3)$.
\end{theorem}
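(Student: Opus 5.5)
The plan is to combine Lemma \ref{lemma-a=1-1}, which controls every pendent tree, with a propagation argument around the cycle $C$, in the spirit of Theorem \ref{theorem-a=0}.

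\textbf{Step 1: pendent trees and $b$.} Write $V(H^\circ)=V(C)\cup V(T_1)\cup\cdots\cup V(T_s)$ as in Theorem \ref{theorem-a=0}. Apply Lemma \ref{lemma-a=1-1} to each pendent tree $T_i$ attached at $u\in V(C)$ through $p_i\in V(T_i)$.
\begin{itemize}
\item Each $T_i$ has depth $l\leq1$.
\item If $l=1$, then $p_i$ and $u$ satisfy condition (2) of Lemma \ref{prop-type}. So the pendents of $p_i$ are attached by double edges, $w(p_i,u)=1$, and $d(p_i)=d(u)=1+\frac{b}{2}$ is odd.
\item If $l=0$, then $p_i$ is itself a pendent vertex of $u$, with $w(p_i,u)\in\{1,2\}$.
\end{itemize}
I would then use \eqref{eq-s(v)-2} at a pendent vertex $p$ with $w(p,u)=w$. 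It gives $w+b=s(p)=w\,d(u)$, so $d(u)=1+\frac{b}{w}$. Hence every vertex of $C$ carrying pendent vertices has degree $1+b$ or $1+\frac b2$.

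I would next rule out the value $1+b$ and the mixing of single and double pendents at one vertex, in the manner of Lemma \ref{lem-b=0}. This needs $b>0$, which should follow because $s(v)\geq 2d(v)$ for cycle vertices of degree at least $2$. The intended conclusion is that every tree vertex adjacent to $C$ has degree $1$, $2$ or $1+\frac{b}{2}$. It is pendent with $w=1$ only in configurations like $v_1,v_3$ of $D_1$.

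\textbf{Step 2: local rule on $C$.} For $x\in V(C)$ with cycle neighbours $x^-,x^+$, equation \eqref{eq-s(v)-2} reads
$$d(x)+b=w(x,x^-)d(x^-)+w(x,x^+)d(x^+)+\Sigma_x,$$
where $\Sigma_x$ is the contribution of the non-cycle neighbours. By Step 1, $\Sigma_x$ is determined by the numbers of attached pendents with $w=1$, pendents with $w=2$, and $(x,2)$-vertices, each of known degree.

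I would first show that $C$ has no double edges. A double cycle edge forces $d$ of both ends to be at least $3$ while the left side stays small, giving a contradiction as in Subsection \ref{subsection-F=0}. I would also show that $C$ has a vertex of degree $1+\frac b2$: if all cycle vertices had degree $2$, they would have no attachments, and then $|V(F)|=0$. Starting from such a vertex $x$, the equation forces the neighbour degrees. The degree-$(1+\frac b2)$ vertices on $C$ then come in pairs joined by a tree edge (pattern $D_1$, with $v_4$ hanging off $v_2$) or in consecutive cycle pairs (pattern $D_2$). Between consecutive blocks sit degree-$1$ or degree-$2$ connectors.

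Induction along $C$, as in Lemma \ref{lemma-symmetric}, then shows the pattern repeats with period one $D_1$ or one $D_2$. The parity of $1+\frac b2$ distinguishes the two cases: $b\equiv0\pmod 4$ gives $D_1$ and $b\equiv2\pmod4$ gives $D_2$. Finally, $t\geq3$ should come from the cycle having length at least $3$ in the $B$-graph.

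\textbf{Main obstacle.} The hard part is the case analysis in Step 2, pinning down the neighbours of a degree-$(1+\frac b2)$ cycle vertex. There are several combinations of pendent counts and neighbour degrees, and arithmetic alone may not eliminate some of them. My first attempt would be to bound $\Sigma_x$ using $d(x)=1+\frac b2$ and compare the two sides, as in Subcase 3.3 of Lemma \ref{lemma-a=1-1}. That should force exactly the $D_1$ or $D_2$ neighbourhood.
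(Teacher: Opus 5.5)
Your outline follows the paper's route: Lemma~\ref{lemma-a=1-1} to restrict the pendent trees, then \eqref{eq-s(v)-2} propagated around $C$. There is a genuine gap, however. What you call the ``main obstacle'' is the whole content of the proof, and the tools you name for the preliminary reductions do not do the job.

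Lemma~\ref{lem-b=0} only forbids mixing single and double pendents at one vertex; it does not exclude $d(x)=1+b$. That exclusion is a budget argument. With $a=1$, \eqref{eq-s(v)-2} reads $\sum_{y\sim v}w(v,y)(d(y)-1)=b$. If $d(u)=1+b$ for some $u\in V(C)$, and $u_1,u_0$ are the next two cycle vertices, then the sum at $u_1$ is at least $w(u,u_1)\,b+w(u_1,u_0)(d(u_0)-1)>b$.

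Likewise, your argument for ``no double edges on $C$'' by analogy with Subsection~\ref{subsection-F=0} fails. $U^3_{3t}$ has $a=1$ and double cycle edges. Moreover, with $b=8$, a cycle path $x_0y_1y_2x_3$ with $d(x_0)=d(x_3)=5$, $d(y_1)=d(y_2)=3$ and $w(y_1,y_2)=2$ satisfies \eqref{eq-s(v)-2} at both ends of the double edge, so nothing ``stays small'' there. Double edges are excluded only through the value $b=2D-2$, with $D=1+\frac b2$, imposed by a cycle vertex carrying attachments. After that one still has to check the runs of attachment-free cycle vertices between such vertices.

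The missing key step is the normalized form of \eqref{eq-s(v)-2} at a cycle vertex $x$ with $d(x)=D$:
\begin{equation*}
\sum_{y\sim x}w(x,y)\bigl(d(y)-2\bigr)=s(x)-2d(x)=b-D=D-2 .
\end{equation*}
Double pendents contribute $0$, each depth-one root contributes $D-2$, and cycle neighbours contribute nonnegative amounts. So there are two possibilities:
\begin{itemize}
\item $x$ carries exactly one depth-one tree and both cycle neighbours have degree $2$. Then $D$ is odd, giving the $D_1$ pattern.
\item $x$ carries none, and $w(x,x^-)(d(x^-)-2)+w(x,x^+)(d(x^+)-2)=D-2$. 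Together with $\sum_{z\sim y}w(y,z)(d(z)-1)=2D-2$ at attachment-free cycle vertices, applied along $C$, this forces single edges, with one neighbour of degree $2$ and one of degree $D$. Then $D$ is even, giving the $D_2$ pattern.
\end{itemize}
This is what the paper's Subcase~1.2 and \eqref{eq-a=1-1} encode. Without it, your Step~2 describes the answer rather than deriving it.

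Two further points:
\begin{itemize}
\item You misread $D_1$. In $\mathcal{H}^2(b,t)$ the vertices $v_1,v_3$ of consecutive copies are identified into degree-$2$ cycle vertices. So there are no single-edge pendents and no degree-$1$ ``connectors''. Your own exclusion of $1+b$ forbids the former, and $d\geq2$ on $C$ forbids the latter.
\item $t\geq3$ cannot come from $|V(C)|\geq3$, since $|V(C)|$ equals $2t$, resp.\ $3t$. In fact the triangle with two degree-$4$ vertices, each carrying one double pendent, satisfies \eqref{eq-s(v)-2} with $a=1$, $b=6$; it is $\mathcal{H}^3(6,1)$. So that bound is not derivable at all; the paper merely asserts it.
\end{itemize}
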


\begin{proof}
We complete the proof by the following two cases.

  \textbf{Case 1.} there exist a pendent tree $T'$ on $C$ with $V(T')>1$.

  Let $H^\circ\cong H'_{uv}T'$, $Q=v_0v_1\cdots v_l$ $(v_l=v)$ be the longest $v$-path of $T'$. By Lemma \ref{lemma-a=1-1}, we have $l=1$, $d(u)=d(v)=1+\frac{b}{2}$, $w(u,v)=1$ and $v,u$ satisfies condition (2) of Proposition \ref{prop-type}, where $1+\frac{b}{2}$ $(\geq3)$ is odd. Moreover, if there exists a pendent vertex $v'\in N(u)$, then $d(v')=w(v',u)=2$. Let $u_1,u_2\in N(u)\cap V(C)$, $c_1$ and $c_2$ ($\geq1$) be the number of pendent vertices, non-pendent vertices of $N(u)\backslash \{u_1,u_2\}$, respectively.

  By \eqref{eq-s(v)-2}, we have $1+\frac{3}{2}b=d(u)+b=s(u)=4c_1+(1+\frac{b}{2})c_2+w(u,u_1)d(u_1)
  +w(u,u_2)d(u_2)$. It is clear that $c_2\leq2$ by $1+\frac{3}{2}b>(1+\frac{b}{2})c_2$.

  \textbf{Subcase 1.1.} $c_2=2$.

  Now $1+\frac{b}{2}=d(u)=2c_1+2+w(u,u_1)+w(u,u_2)$. By \eqref{eq-s(v)-2}, we have  $1+\frac{3}{2}b=s(u)=4c_1+2(1+\frac{b}{2})+w(u,u_1)d(u_1)
  +w(u,u_2)d(u_2)$, which implies $\frac{b}{2}-1=4c_1+w(u,u_1)d(u_1)
  +w(u,u_2)d(u_2)$. Since $\frac{b}{2}-1$ is odd, $w(u,u_1)=w(u,u_2)=2$ is impossible.

  If $w(u,u_1)=w(u,u_2)=1$, then $d(u_1)\geq2$ and $d(u_2)\geq2$. Now $1+\frac{b}{2}=d(u)=2c_1+2+1+1$, which implies $2c_1=\frac{b}{2}-3$. However, $4c_1+w(u,u_1)d(u_1)
  +w(u,u_2)d(u_2)\geq (b-6)+2+2=b-2>\frac{b}{2}-1$, a contradiction.

  If $w(u,u_1)=1$ and $w(u,u_2)=2$, then $d(u_1)\geq2$ and $d(u_2)\geq3$. Now $1+\frac{b}{2}=d(u)=2c_1+2+1+2$, which implies $2c_1=\frac{b}{2}-4$. However, $4c_1+w(u,u_1)d(u_1)
  +w(u,u_2)d(u_2)\geq (b-8)+2+2\times3=b>\frac{b}{2}-1$, a contradiction.

  If $w(u,u_1)=2$ and $w(u,u_2)=1$, then there is a contradiction through similar discussions.

  Thus, there is no such $H$.

  \textbf{Subcase 1.2.} $c_2=1$.

  By \eqref{eq-s(v)-2}, we have  $1+\frac{3}{2}b=s(u)=4c_1+(1+\frac{b}{2})+w(u,u_1)d(u_1)
  +w(u,u_2)d(u_2)$, which implies $b=4c_1+w(u,u_1)d(u_1)+w(u,u_2)d(u_2)$. Combining with $1+\frac{b}{2}=d(u)=2c_1+1+w(u,u_1)+w(u,u_2)$, we have $w(u,u_1)(d(u_1)-2)+w(u,u_2)(d(u_2)-2)=0$, which implies $d(u_1)=d(u_2)=2$. Then it is clear that $w(u,u_1)=w(u,u_2)=1$. Considering the similarity of $u_1$ and $u_2$, we will only study $u_2$ below.

  Since $d(u_2)=2$ and $u_2\in V(C)$, we can suppose $N(u_2)\backslash\{u\}=\{u_3\}$. It can be verified that $w(u_2,u_3)=1$. Then by \eqref{eq-s(v)-2}, we have $2+b=s(u_2)=w(u,u_2)d(u)+w(u_2,u_3)d(u_3)$, which implies $d(u_3)=1+\frac{b}{2}$.

  Suppose that $N(u_3)\cap V(C)\backslash\{u_2\}=\{u_4\}$. By Lemma \ref{lemma-a=1-1}, we know that any vertex $v'\in N(u_3)\backslash\{u_2,u_4\}$ is a pendent vertex or a $(u_3,2)$-vertex with $d(v')=1+\frac{b}{2}$ and $w(v',u_3)=1$. In fact, if $v'$ is a pendent vertex, then $d(v')=w(v',u_3)=2$ by (1) of Lemma \ref{lemma-dv1-dv2} and $b\neq0$.

  Let $c_1',c_2'$ be the number of pendent vertices, non-pendent vertices of $N(u_3)\backslash\{u_2,u_4\}$, respectively. Then by \eqref{eq-s(v)-2}, we have $1+\frac{3}{2}b=s(u_3)=4c_1'+c_2'(1+\frac{b}{2})+w(u_2,u_3)d(u_2)+
  w(u_3,u_4)d(u_4)$. Combining with $1+\frac{b}{2}=d(u_3)=2c_1'+c_2'+w(u_2,u_3)+w(u_3,u_4)$, we have $\frac{b}{2}-1=c_2'(\frac{b}{2}-1)+w(u_3,u_4)(d(u_4)-2)$. By $\frac{b}{2}-1\geq1$, it is clear that $c_2'\leq1$.

  If $c_2'=1$, then $w(u_3,u_4)(d(u_4)-2)=0$, which implies $d(u_4)=2$, and thus $w(u_3,u_4)=1$. It is not difficult to find that $u_4$ are similar to $u_2$, then we can study $u_4$ by the similar way that we study $u_2$.

  If $c_2'=0$, then $\frac{b}{2}-1=w(u_3,u_4)(d(u_4)-2)$. Since $\frac{b}{2}-1$ is odd, we know that $w(u_3,u_4)=1$. However, by $w(u_2,u_3)=w(u_3,u_4)=1$ and any vertex $v'\in N(u_3)\backslash\{u_2,u_4\}$ is a pendent vertex with $w(v,u_3)=2$, we have $d(u_3)$ is even, which contradicts with $d(u_3)=1+\frac{b}{2}$ is odd.

  Combining with the above discussion, we know that $H$ can be composed of repeated connections of $D_1$, and thus $H\in \mathcal{H}^2(b,t)$, where $b=4k$ $(\geq4)$ and $t\geq3$.

  \textbf{Case 2.} any pendent tree $T$ on $C$ satisfies $|V(T')|=1$.

  Let $H^\circ\cong H'_{uv}T'$. Then $V(T')=\{v\}$. Suppose that $N(u)\cap V(C)=\{u_1,u_2\}$, $N(u_1)\cap V(C)=\{u,u_0\}$ and $N(u_2)\cap V(C)=\{u_2,u_3\}$. Since any pendent tree $T$ on $C$ satisfies $|V(T)|=1$, we have $d(u_2)\in\{1+b,1+\frac{b}{2}\}$ if $N(u_2)\backslash\{u,u_3\}\neq\emptyset$ and $d(u_2)\in\{2,3,4\}$ if $N(u_2)\backslash\{u,u_3\}=\emptyset$.

  \textbf{Subcase 2.1.} $w(u,v)=2$.

  By $w(u,v)=2$, $b\neq0$ and Lemma \ref{lem-b=0}, we know that any vertex $v'\in N(u)\backslash\{u_1,u_2\}$ is a pendent vertex with $d(v')=w(v',u)=2$.
  By (1) of Lemma \ref{lemma-dv1-dv2}, we have $d(u)=1+\frac{b}{2}$ $(\geq4)$. By \eqref{eq-s(v)-2}, we have $1\cdot d(u)+b=s(u)=w(u,u_1)d(u_1)+w(u,u_2)d(u_2)+2\big(d(u)-w(u,u_1)-w(u,u_2)\big)$, which implies
  \begin{equation}\label{eq-a=1-1}
    \frac{b}{2}-1=w(u,u_1)\big(d(u_1)-2\big)+w(u,u_2)\big(d(u_2)-2\big).
  \end{equation}

  \textbf{Subcase 2.1.1.} $N(u_2)\backslash\{u,u_3\}\neq\emptyset$.

  If any vertex $p\in N(u_2)\backslash\{u,u_3\}$ is a pendent vertex with $d(p)=w(p,u_2)=1$, then $d(u_2)=1+b$ by (1) of Lemma \ref{lemma-dv1-dv2}. However, $w(u,u_1)\big(d(u_1)-2\big)+w(u,u_2)\big(d(u_2)-2\big)>
  w(u,u_2)(b-1)\geq b-1>\frac{b}{2}-1$ by $1+\frac{b}{2}\geq4$, which contradicts with \eqref{eq-a=1-1}.

  If any vertex $p\in N(u_2)\backslash\{u,u_3\}$ is a pendent vertex with $d(p)=w(p,u_2)=2$, then $d(u_2)=1+\frac{b}{2}$ by (1) of Lemma \ref{lemma-dv1-dv2}. By \eqref{eq-a=1-1}, we have $\frac{b}{2}-1=w(u,u_1)\big(d(u_1)-2\big)+w(u,u_2)(\frac{b}{2}-1)$, and this equation holds only if $d(u_1)=2$ and $w(u,u_2)=1$. Then it is clear that $w(u,u_1)=1$.

  Since $d(u_2)=1+\frac{b}{2}\geq4$, $w(u,u_2)=1$ and $w(u_2,u_3)\leq2$, we have $N(u_2)\backslash\{u,u_3\}\neq\emptyset$, moreover, any vertex $p\in N(u_2)\backslash\{u,u_3\}$ is a pendent vertex with $d(p)=w(p,u_2)=2$ by $d(u_2)=1+\frac{b}{2}$, the structure of $H$ and (1) of Lemma \ref{lemma-dv1-dv2}. Then by \eqref{eq-s(v)-2}, we have $1\cdot d(u_2)+b=s(u_2)=w(u_2,u)d(u)+w(u_2,u_3)d(u_3)+2\big(d(u_2)-w(u_2,u)
  -w(u_2,u_3)\big)$, which implies $w(u_2,u_3)(d(u_3)-2)=0$. Thus, $d(u_3)=2$, and then $w(u_2,u_3)=w(u_3,u_4)=1$, where $\{u_4\}=N(u_3)\backslash\{u_2\}$.

  By \eqref{eq-s(v)-2}, we have $1\cdot d(u_3)+b=s(u_3)=w(u_2,u_3)d(u_2)+w(u_3,u_4)d(u_4)$, which implies $d(u_4)=1+\frac{b}{2}$. It can be verified that $u_4$ is similar to $u$, and $u_1$ is similar to $u_3$, then we can study $u_4$, $u_1$ in a similar way. By analogy, it is not difficult to find that $H$ can be composed of $D_2$. Thus, $H\in \mathcal{H}^3(b,t)$, where $b=4k+2$ $(\geq6)$ and $t\geq3$.

  \textbf{Subcase 2.1.2.} $N(u_1)\backslash\{u,u_0\}\neq\emptyset$.

  By a discussion similar to Subcase 2.1.1, we have $H\in \mathcal{H}^3(b,t)$.

  \textbf{Subcase 2.1.3.} $N(u_1)\backslash\{u,u_0\}=\emptyset$ and $N(u_2)\backslash\{u,u_3\}=\emptyset$.

  If $w(u,u_1)=2$, then $d(u_1)\in\{3,4\}$ by $N(u_1)\backslash\{u,u_0\}=\emptyset$. However, by $1\cdot d(u_1)+b=s(u_1)=w(u_1,u)d(u)+w(u_0,u_1)d(u_0)$, it is easy to check that $d(u_0)=1$ for $d(u_1)\in\{3,4\}$, and this is a contradiction.

  If $w(u,u_2)=2$, then there is a contradiction through a similar discussion.

  If $w(u,u_1)=w(u,u_2)=1$, then $d(u)=1+\frac{b}{2}$ $(\geq4)$ is even. By \eqref{eq-a=1-1}, we have $\frac{b}{2}-1=d(u_1)-2+d(u_2)-2$. Since $\frac{b}{2}-1$ is even and $N(u_1)\backslash\{u,u_0\}=N(u_2)\backslash\{u,u_3\}=\emptyset$, we have $d(u_1)=d(u_2)=2$ or $d(u_1)=d(u_2)=3$. If $d(u_1)=d(u_2)=2$, then $\frac{b}{2}-1=0$, which contradicts $1+\frac{b}{2}\geq4$. If $d(u_1)=d(u_2)=3$, then $\frac{b}{2}-1=2$ and $w(u_2,u_3)=2$. However, by \eqref{eq-s(v)-2} we have $1\cdot d(u_2)+b=s(u_2)=w(u,u_2)d(u_2)+w(u_2,u_3)d(u_3)$, and this implies $d(u_3)=1+\frac{b}{4}$ is not an integer, a contradiction.

  \textbf{Subcase 2.2.} $w(u,v)=1$.

  By $w(u,v)=1$, $b\neq0$ and Lemma \ref{lem-b=0}, we know that any vertex $v'\in N(u)\backslash\{u_1,u_2\}$ is a pendent vertex with $d(v')=w(v',u)=1$.
  By (1) of Lemma \ref{lemma-dv1-dv2}, we have $d(u)=1+b$ $(\geq3)$. By \eqref{eq-s(v)-2}, we have $1\cdot d(u)+b=s(u)=w(u,u_1)d(u_1)+w(u,u_2)d(u_2)+\big(d(u)-w(u,u_1)-w(u,u_2)\big)$, which implies
  \begin{equation}\label{eq-a=1-2}
    b=w(u,u_1)\big(d(u_1)-1\big)+w(u,u_2)\big(d(u_2)-1\big).
  \end{equation}

  \textbf{Subcase 2.2.1.} $N(u_2)\backslash\{u,u_3\}\neq\emptyset$.

  If any vertex $p\in N(u_2)\backslash\{u,u_3\}$ is a pendent vertex with $d(p)=w(p,u_2)=1$, then $d(u_2)=1+b$ by (1) of Lemma \ref{lemma-dv1-dv2}. However, it is clear that $w(u,u_1)\big(d(u_1)-1\big)+w(u,u_2)\big(d(u_2)-1\big)>
  w(u,u_2)b\geq b$, which contradicts with \eqref{eq-a=1-2}.

  If any vertex $p\in N(u_2)\backslash\{u,u_3\}$ is a pendent vertex with $d(p)=w(p,u_2)=2$, then $d(u_2)=1+\frac{b}{2}$ by (1) of Lemma \ref{lemma-dv1-dv2}. However, it can be verified that $1\cdot d(u_2)+b=s(u_2)<w(u,u_2)(d(u)-2)+2d(u_2)<w(u,u_2)d(u)+w(u_2,u_3)d(u_3)+2\big(d(u_2)-w(u,u_2)
  -w(u_2,u_3)\big)$, which contradicts the result obtained by applying \eqref{eq-s(v)-2} to $u_2$.

  \textbf{Subcase 2.2.2.} $N(u_1)\backslash\{u,u_0\}\neq\emptyset$.

  By a discussion similar to Subcase 2.2.1, we know that there is no such $H$.

  \textbf{Subcase 2.2.3.} $N(u_1)\backslash\{u,u_0\}=\emptyset$ and $N(u_2)\backslash\{u,u_3\}=\emptyset$.

  If $w(u_1,u)=2$, then $d(u_1)=w(u_1,u)+w(u_0,u_1)=2+w(u_0,u_1)\leq4$ by $N(u_1)=\{u,u_0\}$. It is easy to check that $1\cdot d(u_1)+b=s(u_1)<w(u_1,u)d(u)+w(u_0,u_1)d(u_0)$, and this implies a contradiction when applying \eqref{eq-s(v)-2} to $u_1$.

  If $w(u,u_2)=2$, then there is a contradiction through a similar discussion.

  If $w(u,u_1)=w(u,u_2)=1$, then $d(u_2)=w(u,u_2)+w(u_2,u_3)=1+w(u_2,u_3)$ by $N(u_2)=\{u,u_3\}$. It can be verified that $1+w(u_2,u_3)+b=1\cdot d(u_2)+b=s(u_2)\leq w(u_2, u)d(u_2)+w(u_2,u_3)d(u_3)=1+b+w(u_2,u_3)d(u_3)$. Importantly, the equality is valid only when $d(u_3)=1$. But since we know $d(u_3)\geq2$ and if we applying \eqref{eq-s(v)-2} to $u_2$, we'll find that this leads to a contradiction.
\end{proof}

\section{Conclusion}\label{sec-conclusion}

In this paper, we characterize signed graphs with exactly two main eigenvalues by studying $(0,1,2)$-multigraphs $H$ whose $B$-graphs are unicyclic. If the integers $a,b$ satisfies \eqref{eq-s(v)-2} for any $v\in V(H)$, then $H$ can be analyzed through the following four cases: (1) $a=0$ and $b\neq0$; (2) $a=1$ and $b\neq0$; (3) $a\geq2$ and $b\neq0$; (4) $a\neq0$ and $b=0$.

We have completely resolved Cases (1) and (2), and partially resolved Case (3). These results are summarized in Table \ref{tab-summarize}. Cases (3) (Problem \ref{problem-1}) and (4) (Problem \ref{problem-2}) remain open for further research.

\begin{table}[h]
\renewcommand{\arraystretch}{1.5}\caption{Known results.}\label{tab-summarize}\vskip0.05cm
\centering
{
\begin{tabular}{cl}\hline\hline
$(a,b)$& $H$ \\
\hline\hline
(1) $a=0$, $b>0$& $H\in \mathcal{H}^1(b,t)$, where $b=4k+2\geq 6$ and $t\geq4$ are even.\\
\hline
(2) $a=1$, $b\neq0$& $H\in \mathcal{H}^2(b,t)$ with $b=4k\geq4$ and $t\geq3$, or $H\in\mathcal{H}^3(b,t)$ with\\
                   & $b=4k+2\geq6$ and $t\geq3$, or $H\cong U^3_{3t}$ with $t\geq1$.\\
\hline
(3) $a\geq2$, $b\neq0$ & $H\in\{U^1_{4t},U^2_{3t},U^4_{5t},U^5_{4t}\}$ $(t\geq1)$ if the $B$-graph of $H$ is a cycle,\\
                       & and the problem is unsolved otherwise.\\
\hline
(4) $a>0$, $b=0$& No such $H$ exists if the $B$-graph of $H$ is a cycle, and the problem \\
                & is unsolved otherwise.\\
\hline\hline
\end{tabular}}
\end{table}

\begin{problem}\label{problem-1}
  Let $H,a,b$ be as in Proposition $\ref{prop-a-b-fenlei-unicyclic}$. Characterize $H$ that satisfies $a\geq2$ and $b\neq0$.
\end{problem}

\begin{problem}\label{problem-2}
  Let $H,a,b$ be as in Proposition $\ref{prop-a-b-fenlei-unicyclic}$. Characterize $H$ that satisfies $a>0$ and $b=0$.
\end{problem}

%

\noindent
{\bf Declaration of competing interest}\,

The authors declare that they have no known competing financial interests or personal relationships that could have appeared to influence the work reported in this paper.

\noindent
{\bf Data availability}\,

No data was used for the research described in the article.

\noindent
{\bf Acknowledgements}\,

This work is supported by the National Natural Science Foundation of China (Grant No. 12501491), the Fundamental Research Program of Shanxi Province (Grant No. 202503021212100), the Scientific and Technological Innovation Programs of Higher Education Institutions in Shanxi, the Natural Science Foundation of Hubei Province (No. 2025AFD006), the Foundation of Hubei Provincial Department of Education (No. Q20232505), and the Open Project Program Key Laboratory of Operations Research and Cybernetics of Fujian Universities, Fuzhou University (Grant No. G20240905).



\end{spacing}

\begin{thebibliography}{99}
\bibitem{Akbari} S. Akbari, F.A.M. Fran\c{c}a, E. Ghasemian, M. Javarsineh, L.S. de Lima, The main eigenvalues of signed graphs, Linear Algebra Appl., 614 (2021) 270--280.

\bibitem{Andelic} M. An$\mathrm \dbar$eli\'{c}, T. Koledin and Z. Stani\'{c}, Signed graphs whose all Laplacian eigenvalues are main, Linear and Multilinear Algebra, (2023) 2409--2425.

\bibitem{Cvetkovic-Main} D. Cvetkovi\'c, The main part of spectrum, divisors and switching of graphs, Publ. Inst. Math. (Beograd), 23 (1978) 31--38.

\bibitem{Cve-Controllable}D. Cvetkovi\'c, P. Rowlinson, Z. Stani\'c, M.G. Yoon, Controllable graphs, Bull. Acad. Serbe Sci. Arts. Classe Sci. Math. Natur. Sci. Math., 140 (2011) 81--88.

\bibitem{Cve-Controllable-2}D. Cvetkovi\'c, P. Rowlinson, Z. Stani\'c, M.G. Yoon, Controllable graphs with least eigenvalue at least $-2$, Appl. Anal. Discrete Math., 5 (2011) 165--175.

\bibitem{Cve-group}D. Cvetkovi\'c, P.W. Fowler, A group-theoretical bound for the number of main eigenvalues of a graph, J. Chem. Inf. Comput. Sci., 39 (1999) 638--641.

\bibitem{Du-1}Z. Du, F. Liu, S. Liu, Z. Qin, Graphs with $n-1$ main eigenvalues, Discrete Math., 344 (2021) 112397.

\bibitem{Du-2}Z. Du, L. You, H. Liu, F. Liu, Further results on almost controllable graphs, Linear Algebra Appl., 677 (2023) 31--50.

\bibitem{Du-3}Z. Du, L. You, H. Liu, Almost controllable graphs and beyond, Discrete Math., 347 (2024) 113743.

\bibitem{Du-4}Z. Du, L. You, H. Liu, X. Yuan, Signed graphs with exactly two main eigenvalues, Linear Algebra Appl., 695 (2024) 1--27.

\bibitem{Du-5}Z. Du, L. You, H. Liu, Further results on signed graphs with exactly two distinct main eigenvalues, Discrete Math., 349 (2026) 114712.

\bibitem{Feng}L. Feng, L. Lu, D. Stevanovi\'{c}, A short remark on graphs with two main eigenvalues, Appl. Math. Comput., 369 (2020) 124858.

\bibitem{Farrugia}A. Farrugia, On strongly asymmetric and controllable primitive graphs, Discrete Appl. Math., 211 (2016) 58--67.

\bibitem{Hou-bicyclic}Y. Hou, Z. Tang, W.C. Shiu, Some results on graphs with exactly two main eigenvalues, Appl. Math. Lett., 25 (2012) 1274--1278.

\bibitem{Hou-unicyclic}Y. Hou, F. Tian, Unicyclic graphs with exactly two main eigenvalues, Appl. Math. Lett., 19 (2006) 1143--1147.

\bibitem{Hou-trees}Y. Hou, H. Zhou, Trees with exactly two main eigenvalues, J. Nat. Sci. Hunan Norm. Univ., 26 (2005) 1--3 (in Chinese).

\bibitem{Hagos} E.M. Hagos, Some results on graph spectra, Linear Algebra Appl., 356 (2002) 103--111.

\bibitem{Hayat}S. Hayat, J.H. Koolen, F. Liu, Z. Qiao, A note on graphs with exactly two main eigenvalues, Linear Algebra Appl., 511 (2016) 318--327.

\bibitem{Li}S. Li, J. Wang, On the generalized $A_\alpha$-spectral characterizations of almost $\alpha$-controllable graphs, Discrete Math., 345 (2022) 112913.

\bibitem{Lepovic}M. Lepovi\'{c}, On eigenvalues and main eigenvalues of a graph, Math. Moravica, 4 (2000) 51--58.

\bibitem{Qiu}L. Qiu, W. Wang, W. Wang, H. Zhang, A new criterion for almost controllable graphs being determined by their generalized spectra, Discrete Math., 345 (2022) 113060.

\bibitem{Rowlinson-survey} P. Rowlinson, The main eigenvalues of a graph: a survey, Appl. Anal. Discrete Math. 1 (2007) 455--471.

\bibitem{Stanic} Z. Stani\'{c}, Main eigenvalues of real symmetric matrices with application to signed graphs, Czechoslovak Mathematical Journal, 70 (2020) 1091--1102.

\bibitem{Stanic-2}Z. Stani\'{c}, Further results on controllable graphs, Discrete Appl. Math., 166 (2014) 215--221.

\bibitem{Shao} Z. Shao, X. Yuan, Some signed graphs whose eigenvalues are main, Appl. Math. Comput., 423 (2022) 127014.

\bibitem{Wang}W. Wang, F. Liu, Generalized spectral characterizations of almost controllable graphs, European J. Combin., 96 (2021) 103348.
%
%
%
%

%
%
%
%
%

%

%
%
%
%
%
%
%
%

%
%

%
%

%



\end{thebibliography}
\end{document}